\def\b{\boldsymbol}
\newcommand{\tcr}[1]{\textcolor{red}{#1}}
\DeclareMathOperator{\sgn}{sgn}
\numberwithin{equation}{section}
\newtheorem{lemma}{Lemma}
\newtheorem{theorem}{Theorem}
\newtheorem{proposition}{Proposition}
\newtheorem{definition}{Definition}
\newtheorem{remark}{Remark}
\newtheorem{condition}{Condition}
\title{\Large \bf On Runge-Kutta methods for the water wave equation and its simplified nonlocal hyperbolic model}
\author{Lei Li\thanks{Department of Mathematics, Duke University; email: leili@math.duke.edu}\and Jian-Guo Liu\thanks{Department of Mathematics and Department of Physics, Duke University; email: jliu@phy.duke.edu}
\and Zibu Liu\thanks{School of Mathematical Sciences, Peking University; email: nibio21@pku.edu.cn}
\and Yi Yang\thanks{Department of Electrical Engineering, Tsinghua University, Beijing; email: yiyang16@mail.tsinghua.edu.cn} 
\and Zhennan Zhou \thanks{Beijing International Center for Mathematical Research, Peking University; email: zhennan@bicmr.pku.edu.cn}
}
\date{\today}
\begin{document}
%\title{On Runge-Kutta methods of a type of nonlocal hyperbolic systems and water wave equations}
%\date{}
\maketitle
\begin{abstract}
There is a growing interest in investigating numerical approximations of the water wave equation in recent years, whereas the lack of rigorous analysis of its time discretization inhibits the design of more efficient algorithms. In this work, we focus on a nonlocal hyperbolic model, which essentially inherits the features of the water wave equation, and is simplified from the latter. For the constant coefficient case, we carry out systematical stability studies of the fully discrete approximation of such systems with the Fourier spectral approximation in space and general Runge-Kutta method in time. In particular, we discover the optimal time step constraints, in the form of a modified CFL condition, when certain explicit Runge-Kutta method are applied. Besides, the convergence of the semi-discrete approximation of variable coefficient case is shown, which naturally connects to the water wave equation. Extensive numerical tests have been performed to verify the stability conditions and simulations of the simplified hyperbolic model in the high frequency regime and the water wave equation are also provided.  
\end{abstract} 
\section{Introduction}\label{sec:intro}
%\blue{ZZ: hmmm... need a more specific start. I will rework introduction later.}
%Hyperbolic equations are used to model advection or transport phenomena (\tcr{some references from classical book}). There are characteristics for the equations and the propagation speed is finite. In this sense, the hyperbolic equations are usually local (changing initial data will not affect the solutions far away in a short time). In discretizating these hyperbolic equations \cite{leveque2002}, the CFL condition $\tau/h\le C$ for explicit schemes are often required.

In the time-dependent free-surface flow problems, or the water wave problems, which date back to the early 20th century (see \cite{Milne,taylor50}), the motion of the surface has global interaction. In the past dew decades, the water wave problems has attracted much theoretic and numerical attention (see \cite{bhl96,Dull17,wu97,DiasBridges06} for examples). Since the governing equations are irrotational Euler equations, the system is not dissipating and exhibits some hyperbolic behaviors.

In \cite{beale93}, starting from the irrotational Euler equations, Beale, Hou and Lowengrub derived the equations for the fluid interface, or waterwave equations, in Lagrangian variables (see Section \ref{sec:towaterwave} for more information). By linearizing the waterwave equations, they found that the linearized equations can be changed to the following system of equations (Equation (2.8) in \cite{beale93}) for $(\alpha, t)\in \mathbb{R}\times (0,\infty)$:
\begin{gather}\label{eq:bealeerr}
\left\{
\begin{split}
\partial_t \eta&=\sigma(\alpha, t) \Lambda \zeta+g_1,\\
\partial_t \zeta&=-c(\alpha, t) \eta,\\
\partial_t\delta&=g_2.
\end{split}
\right.
\end{gather}
Here, $\alpha$ is the Lagrangian coordinate, and $\sigma$ and $c$ are positve, which depends on the solution of the waterwave equations, so independent of $\eta$ nad $\zeta$. $\eta$ is the normal component of the perturbation of the position of the interface; $\delta$ is a certain combination of the tangential and normal components of the perturbation of the position. $\zeta$ is a variable describing the perturbation of the potential. $g_1$ and $g_2$ are some extra terms in the linearization that can be controlled. The operator
\begin{gather}
\Lambda=(-\Delta)^{1/2}=H\partial_{\beta}
\end{gather}
is the $1/2$-fractional Laplacian with Fourier symbol $|k|$, where $H$ is the Hilbert transform with symbol $-i\sgn(k)$. On $\mathbb{R}$, the Hilbert transform $H$ is given by 
\[
H(f)(x)=\frac{1}{\pi}\mathrm{p.v.}\int_{-\infty}^{\infty}\frac{f(y)}{x-y}\,dy.
\]
Note that $f+iH(f)$ gives the trace of an analytic function in the upper half plane while $f-iH(f)$ gives the trace of an analytic function in the lower half plane. As we shall see, system \eqref{eq:bealeerr} is $L^2$ stable and dispersive. This system then shows the key properties of hyperbolic systems while $\Lambda$ is nonlocal. 

Note that system \eqref{eq:bealeerr} is intrinsic to the waterwave problems, no matter whether we use Lagrangian coordinate or not. Indeed, in proving the well-posedness of water wave problems in Sobolev spaces, Wu has achieved remarkable results in \cite{wu97} by using a conformal mapping formulation and reducing the water wave system to a quasi-linear hyperbolic system (see (4.6) and (5.8$\epsilon$) in \cite{wu97} and let $w=-v$) for $(\beta, t)\in \mathbb{R}\times (0,\infty)$
\begin{gather}\label{eq:wu97}
\begin{split}
& u_t=\sigma(\beta,t)\Lambda v+ b(\beta, t)\partial_{\beta}u+g_1,\\
& v_t=-c(\beta, t)u+b(\beta, t)\partial_{\beta}v+g_2
\end{split}
\end{gather}
where $\sigma>0$ and $c>0$. 
In this system $u=X_{tt}$ and $v=-X_t$ where $X$ is the $x$-coordinate of the interface. Other variables depend on the solutions. The extra derivative in $t$ plays the role of linearization in \cite{beale93}. We find that \eqref{eq:wu97} shares the same structure with \eqref{eq:bealeerr}, so the nonlocal hyperbolic system is intrinsic to waterwave problems. We say the system `hyperbolic' because it is dispersive while energy stable. Indeed, Wu used the term `hyperbolic system' in the Remark below \cite[Eq. (5.8$\epsilon$)]{wu97}. Note that there is transport terms in \eqref{eq:wu97} compared with \eqref{eq:bealeerr}. This is because $\beta$ now is not the material coordinate and it is a variable associated with the conformal mapping.

Note that if we study the periodic waves as in \cite{bhl96} or the interfaces of two dimensional drops, we then have periodic boundary conditions. This then motivates us to study the following nonlocal hyperbolic system is intrinsic to the water wave problems:
\begin{gather}\label{eq:nonlocalsystem}
\begin{split}
& u_t=\sigma(x,t)\Lambda v+g_1,\\
& v_t=-c(x, t)u+g_2
\end{split}
\end{gather}
for $(x, t)\in \mathbb{R}\times (0,\infty)$. 
If $\sigma, c$ are constant and $g_1=g_2=0$, the system is reduced to the following second order (in time) nonlocal hyperbolic equation
\begin{gather}\label{eq:simpleeq}
u_{tt}=-\mu \Lambda u,
\end{gather}
where $\mu=\sigma c$.
For heuristic purposes, we carry out some preliminary analysis and present the basic properties of \eqref{eq:simpleeq} in Section~\ref{sec:basic}.

If we consider \eqref{eq:nonlocalsystem} with periodic boundary conditions, or 
\begin{gather}\label{eq:nonlocal2}
\begin{split}
& u_t=\sigma(\theta,t)\Lambda v+g_1,\\
& v_t=-c(\theta, t)u+g_2
\end{split}
\end{gather}
with $\theta\in \mathbb{T}=\mathbb{R}/2\pi\mathbb{Z}$ and $t\in (0,\infty)$, the Hilbert transform $H$ still has symbol $-i\sgn(k)$ but the formula now is given by
\[
Hf(\theta)=\mathrm{p.v.}\int_{\mathbb{T}} f(\tau)
\cot\left(\frac{\theta-\tau}{2}\right)\frac{d\tau}{2\pi}.
\] 
$f+iH(f)$ then gives the trace of an analytic function on the unit disk.
Indeed, in studying periodic wave phenomena as in \cite{bhl96} or the motion of two dimensional drops, we have periodic boundary conditions. Studying the system on $\mathbb{T}$ makes the analysis easy while keeping the main structures. 

Numerical studies of waterwaves have been performed in many papers \cite{bhl96,dksz96,dzk96,Chalikov05,Turner16}. The numerical methods can roughly be divided into two classes depending on whether the conformal mapping is used or not. In \cite{bhl96}, the waterwave problems were solved by an integral formulation and its discretization (see Section \ref{sec:towaterwave} for more information). However, the convergence was proved with time variable being kept continuous. The discussion of the fully discretized system seems challenging. In \cite{dksz96,dzk96,Chalikov05,Turner16}, conformal mappings are used for numerical simulations but no rigorous numerical analysis for conformal mapping formulation has been performed. Meanwhile, although the analytical properties the nonlocal hyperbolic system \eqref{eq:nonlocalsystem} is relatively well understood in \cite{beale93,wu97}, the numerical studies of such equations have not been thoroughly investigated. We intend to, however, focus on numerical analysis of the simplified model \eqref{eq:nonlocalsystem}, to shed light on the distinct properties of such hyperbolic systems and waterwave simulations. Due to the presence of the nonlocal terms and the fact that the nonlocal term has a simple Fourier symbol, it is natural for one to choose the pseudo-spectral approximation in the spatial discretization, which is often favored by wave equations (see for example \cite{trefethen2000,baojinm2002}). The primary goal of this paper to analyze the Runge-Kutta methods when applied to such nonlocal wave equations. In particular, we aim to explore the optimal time step sizes, in terms of a CFL type condition, when certain explicit Runge-Kutta methods are used. As we shall show in Section~\ref{sec:basic}, the hyperbolic system \eqref{eq:nonlocalsystem} is also dispersive and may exhibit multiscale behavior, and thus, the time step constraint is more severe when the wave number is large. Consequently, finding optimal time steps with respect to the wave numbers is naturally desired. (will add a few more papers to cite here)

In the work, we have systematically analyzed stability conditions of general Runge-Kutta methods for the hyperbolic system \eqref{eq:bealeerr} with constant coefficients, including the high frequency regime, and discussed the extensions to the variable coefficient cases and to the full wave wave simulations. We have shown that, naive time discretization of the \eqref{eq:bealeerr} results in the familiar hyperbolic CFL constraint $\Delta t = O( \Delta x/K)$, which does not respect the nature of the propagation properties of \eqref{eq:bealeerr}. Here, $K$ denotes the typical wave number of the water waves. This constraint does not lead to additional challenge when $K=O(1)$. However, when $K \gg 1$, this stability condition results in {\it unnecessarily} over-resolved time steps, since it is well known that one needs to require $\Delta x=O(1/K)$ to avoid aliasing error. But, we have shown that, for typical explicit Runge-Kutta schemes whose stability regions cover part of the imaginary axis, this stability condition is improved to $\Delta t = O(\sqrt{\Delta x/K})$, and thus, the overall meshing strategy is optimal, namely, both time steps and spatial grid size only need to resolve the wave oscillation
\[
\Delta t=O(1/K), \quad \Delta x= O(1/K).
\] 
This result is sharp in the view that one cannot capture the accurate wave function without resolving its oscillations, and hence, it greatly facilitates efficient simulations of the water wave problem.

The rest of the paper is organized as follows: in Section \ref{sec:setup}, we introduce the basic notations and setup for the numerical analysis; in Section \ref{sec:constantcoe}, we discuss thoroughly the discretizations of the nonlocal system with Runga-Kutta (both explicit and implicit) method in time and Fourier spectral method in space. We then study the discretization of the system with variable coefficients in Section \ref{sec:variable}. We prove the convergence for the semi-discrete schemes using Fourier spectral method or filtered Fourier spectral method, and then discuss the time discretizations using Runga-Kutta methods. We then connect the nonlocal hyperbolic system to waterwave equations in Section \ref{sec:towaterwave}. Lastly, in Section \ref{sec:num}, we perform numerical experiments. The stability conditions for the nonlocal hyperbolic system with variable coefficients and waterwave equations are confirmed numerically. Numerical experiments suggest possible caustics for the system in the high frequency regimes.

\subsection{Basic properties of the nonlocal hyperbolic equation} \label{sec:basic}

In this section, we present a concise review of basic properties of \eqref{eq:simpleeq}, a special case of the hyperbolic system \eqref{eq:bealeerr}. Thus, it suffices to consider the simplified version, the second order wave equation.

Multiplying by $u_t$ on both sides of \eqref{eq:simpleeq}, and integrating over $x$ yields
\[
\frac{d}{dt}\int_{\mathbb{R}} \left(|u_t|^2+ \frac{1}{2}\mu u\Lambda u \right) \,dx=0,
\]
which means the energy
\begin{gather}\label{eq:energy}
E=\int_{\mathbb{R}}\left(|u_t|^2+ \frac{1}{2}\mu u\Lambda u \right) dx,
\end{gather}
is a conserved quantity in time. The dispersion relation can be derived in the following way. On the Fourier side, \eqref{eq:simpleeq} can be written as
\begin{gather}
\hat{u}_{tt}=-\mu|\xi|\hat{u},
\end{gather}
and the Fouriere Transform of plane wave $u(x,t)=Ae^{i(kx-wt)}$ is $\hat{u}(\xi,t)=A\delta(k-2\pi\xi)e^{-iwt}$. Let $\hat{u}(\xi,t)$ be a solution of it, we can get
\begin{gather}\label{eq:dispersive}
-\omega^2=-\mu |\xi|\Rightarrow \omega=\pm\sqrt{\mu |\xi|},
\end{gather}which is the dispersion relation. Because $\omega$ is relative to $\xi$, so the system is dispersive. Remember that we have proved the system is also energy stable by formula \eqref{eq:energy}, so the system is 'hyperbolic'. But due to the fractional Laplacian operator, the equation is non-local in space.
To have a better understanding of the system, we move on to the fundamental solution of the following Cauchy problem: 
\begin{gather}\label{eq:Green}
\left\{
\begin{split}
& u_{tt}+\mu\Lambda u=0,(x,t)\in\mathbb{R}\times\mathbb{R}^+\\
& u(x,0)=\delta(x),\\
& u_t(x,0)=0.
\end{split}
\right.
\end{gather}
On the Fourier side:
\begin{gather}\label{eq:GreenFourier}
\left\{
\begin{split}
& \hat{u}_{tt}+\mu|\xi|\hat{u}=0,(\xi,t)\in\mathbb{R}\times\mathbb{R}^+\\
& \hat{u}(\xi,0)=1,\\
& \hat{u}_t(\xi,0)=0.
\end{split}
\right.
\end{gather}
% Obviously, the general solution of it can be written as
% \begin{gather}\label{fun:greenforFourier}
% \hat{u}(\xi,t)=c_1e^{-i\sqrt{\mu|\xi|}t}+c_2e^{-i\sqrt{\mu|\xi|}t}.
% \end{gather}
% Using initial boundary condition, we can derive that $c_1=c_2=1/2$. Thus
\eqref{eq:GreenFourier} is a second order ODE initial value problem and the unique solution is
\begin{gather}
\hat{u}(\xi,t)=\mathrm{cos}(\sqrt{\mu|\xi|}t).
\end{gather}
Performing inverse Fourier transform, the fundamental solution of \eqref{eq:Green} is
\begin{gather}
G(x,t)=\int_{\mathbb{R}}\mathrm{cos}(\sqrt{\mu|\xi|}t)e^{2\pi i\xi x}\mathrm{d}\xi.
\end{gather}
Thus, in the following general case,
\begin{gather}\label{eq:general}
\left\{
\begin{split}
& u_{tt}+\mu\Lambda u=0,(x,t)\in\mathbb{R}\times\mathbb{R}^+\\
& u(x,0)=f(x),\\
& u_t(x,0)=0,
\end{split}
\right.
\end{gather}
the solution is the convolution of $f(x)$ and $G(x,t)$. An easy calculation shows that the solution has a equivalent express:
\begin{gather}
u(x,t)=\int_{\mathbb{R}}\hat{f}(\xi)\mathrm{cos}(\sqrt{\mu|\xi|}t)e^{2\pi i\xi x}\mathrm{d}\xi,
\end{gather}
where $\hat{f}(\xi)$ is the Fourier Transform of $f(x)$. Let $f(x)=\mathrm{cos}(kx)$, then the solution is cos($kx$)cos($\sqrt{\mu|k|}t$), this is also reasonable for we have the dispersion relation \eqref{eq:dispersive}.
\begin{remark}
Equation \eqref{eq:simpleeq} is reminiscent of the surface quasi-geostrophic equations
(SQG) studied in \cite{knv07,cv10}. However, the surface SQG equation is dissipating while \eqref{eq:simpleeq} is dispersive.
\end{remark}

\section{Notations and setup}\label{sec:setup}

In this work, we consider the Fourier spectral method or the filtered Fourier spectral method for the spatial discretization of the one dimensional nonlocal hyperbolic system \eqref{eq:nonlocal2} on $\mathbb{T}=\mathbb{R}/2\pi\mathbb{Z}$.  

We discretize the spatial domain with grid size $h=2\pi/N$, and we denote grid points by $\theta_j=jh$, $j \in [N]=\{1,\cdots,N\}$, where $N\in \mathbb{N}$ is even. We denote the time step size by $\tau$, and denote $t^n=n\tau$. The notation $u_j^n$ represents the numerical value of $u(\theta,t)$ at $(\theta_j, t^n)$, and $u^n$ represents the vector $\b{u}^n=(u_j^n)$.  
%The index $j$ and wave numbers $k$ are indexed by
%\begin{gather*}
%j\in [N]:=\{-\frac{1}{2}N+1, \ldots, \frac{1}{2}N\}, ~~~~~~k\in[N]^*:=\{-\frac{1}{2}N+1,\ldots,\frac{1}{2}N\}.
%\end{gather*}

Given any $N$-vector $\b{f}=(f_j)$, we expand each component as a sum of discrete Fourier modes via
\begin{gather*}
f_j=\sum_{k\in[N]^*}\hat{f}_{k} e^{i k\theta_j}, ~~~j\in[N],
\end{gather*}
where $[N]^*:=\{-\frac{1}{2}N+1,\ldots,\frac{1}{2}N\}$ and the discrete Fourier transform $\hat{f}=(\hat{f}_k)$ are given by
\begin{gather*}
\hat{f}_k=\frac{1}{N}\sum_{j\in[N]}f_je^{-ik\theta_j}, ~~~k\in[N]^*.
\end{gather*}

Note that the Hilbert transform $H$ and the derivative of a function becomes certain multipliers when the Fourier transform is applied. When projected onto a uniform grid, those transforms between two function reduce to corresponding relations between the discrete Fourier transforms of two functions confined on the grid.

We define the projected differential operator and the projector Hilbert transform $H$ in the following. 
For two $N-$ vector $\b{f}$ and $\b{g}$, we write
\begin{gather*}
\b{g}=\mathcal{D}\b{f} ~~~\text{to mean}~~~ \hat{g}_k=ik\hat{f}_k, ~~k\in[N]^*, \\
\b{g}=\mathcal{H}\b{f} ~~~\text{to mean}~~~ \hat{g}_k=-i{\rm sgn}(k)\hat{f}_k, ~~k\in[N]^*.
\end{gather*}
We introduce the notation $\mathcal{L}=\mathcal{D}\mathcal{H}$ as the projected $\Lambda=\partial H$, so that
\begin{gather*}
\b{g}=\mathcal{L}\b{f} ~~~\text{means}~~~ \hat{g}_k=\left|k\right|\hat{f}_k, ~~ k\in[N]^*.
\end{gather*}

We use $\mathscr{E}_N$ to represent the set of $N$-vectors. Recall the discrete inner product between two $N$-vectors is defined as
\begin{gather*}
\langle \b{f},\b{g} \rangle=\sum_{j\in[N]}hf_j\bar{g}_j,
\end{gather*}
where $\bar{g}$ means the complex conjugate.
The discrete $\ell^2$ and $\ell^{\infty}$ norms are defined by
\begin{gather*}
\Vert f\Vert_2=\sqrt{\langle \b{f}, \b{f} \rangle}, ~~~\Vert \b{f} \Vert_{\infty}=\max_{j\in[N]}|f_j|.
\end{gather*}
\begin{lemma}\label{lmm:pars}
The discrete Parserval's equality holds
\begin{gather*}
\langle \b{f},\b{g} \rangle=\sum_{j\in[N]}hf_j\bar{g}_j=2\pi\sum_{k\in[N]^*}\hat{f}_k\bar{g_k},
\end{gather*}
\end{lemma}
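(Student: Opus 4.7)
The plan is to prove the identity by substituting the discrete Fourier representations of $\b{f}$ and $\b{g}$ into the left-hand side of the inner product and then exploiting the discrete orthogonality of the complex exponentials on the grid $\{\theta_j\}_{j\in[N]}$.

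First I would write
\[
f_j=\sum_{k\in[N]^*}\hat{f}_k e^{ik\theta_j},\qquad \bar{g}_j=\sum_{k'\in[N]^*}\overline{\hat{g}_{k'}}\,e^{-ik'\theta_j},
\]
insert both expansions into $\sum_{j\in[N]} h f_j\bar{g}_j$, and interchange the finite sums to obtain
\[
\langle \b{f},\b{g}\rangle=h\sum_{k,k'\in[N]^*}\hat{f}_k\,\overline{\hat{g}_{k'}}\;\Big(\sum_{j\in[N]} e^{i(k-k')\theta_j}\Big).
\]
The crux is therefore the evaluation of the inner sum $S(m):=\sum_{j=1}^{N} e^{im\cdot 2\pi j/N}$ for $m=k-k'$.

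Next I would establish the standard discrete orthogonality relation $S(m)=N$ when $N\mid m$ and $S(m)=0$ otherwise, by the geometric series formula: when $e^{2\pi i m/N}\neq 1$ the partial sum telescopes to $(e^{2\pi i m}-1)/(e^{2\pi i m/N}-1)=0$, while if $e^{2\pi i m/N}=1$ every term equals $1$. The small index bookkeeping step is to note that with $k,k'\in[N]^*=\{-N/2+1,\dots,N/2\}$ the difference $m=k-k'$ lies in $\{-N+1,\dots,N-1\}$, so $N\mid m$ forces $m=0$, i.e. $k=k'$. This is the only mildly delicate part of the argument, and the symmetric choice of $[N]^*$ is exactly what makes it work cleanly.

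Finally, the double sum collapses to $\sum_{k\in[N]^*} N\,\hat{f}_k\,\overline{\hat{g}_k}$, and using $hN=2\pi$ gives
\[
\langle \b{f},\b{g}\rangle=2\pi\sum_{k\in[N]^*}\hat{f}_k\,\overline{\hat{g}_k},
\]
which is the claimed discrete Parseval identity. No additional machinery is required; the entire proof is a one-shot computation whose only subtlety is the index-range argument above.
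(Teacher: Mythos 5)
Your proof is correct: the expansion into discrete Fourier modes, the geometric-series orthogonality relation, and the index-range check that $k-k'\in\{-N+1,\dots,N-1\}$ forces $k=k'$ together give exactly the claimed identity with $hN=2\pi$. The paper states this lemma without proof, and your argument is the standard one the authors implicitly rely on, so there is nothing to add.
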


\section{Discretization of the constant-coefficient equations}\label{sec:constantcoe}

We explore the optimal time discretization method in terms of CFL-type stability constraints in this section for the constant-coefficient nonlocal hyperbolic equation \eqref{eq:nonlocal2} for $(\theta, t)\in \mathbb{T}\times (0,\infty)$:
\begin{gather}\label{eq:constantcoe}
\left\{
\begin{split}
& u_t=\sigma \Lambda v,\\
& v_t=-c\, u.
\end{split}
\right.
\end{gather}
%To analyze the stability of the schemes for this constant coefficient system, we use von-Neumann analysis.
The system can be written on the Fourier side as
\begin{equation} \label{sys:Fourier}
\partial_{t}\left(\begin{array}{c} \hat{u} \\
\hat{v} \end{array}\right) 
=
\left(  
\begin{array}{ccc}  
0 & \sigma |k| \\  
-c & 0 \\
\end{array}
\right)
\left(\begin{array}{c} \hat{u} \\
\hat{v} \end{array}\right) :=A\left(\begin{array}{c} \hat{u} \\
\hat{v} \end{array}\right). 
\end{equation}

We derive in the following the stability conditions of general Runge-Kutta methods when applied to this system \eqref{sys:Fourier}, including explicit and implicit schemes. In particular, we aim to investigate optimal stability conditions when explicit Runge-Kutta methods are used. We define notations for the Butcher tableau of a certain $n$-step RK method. The Butcher tableau is given by:
\\
\[
\renewcommand\arraystretch{1.2}
\begin{array}
{c|c}
\b{p} & \b{G} \\
\hline
&\b{w}^T
\end{array}
,\]
where $\b{G}$ is the Runge-Kutta matrix, $\b{w}$ are the weights and $\b{p}$ are the nodes.\par
\subsection{Preliminary Analysis on $\b{A}$, an equivalent system and its Growth Matrix}
In this section, we will first do some preliminary analysis on $\b{A}$ as in \eqref{sys:Fourier}, derive an equivalent system to facilitate analysis, and carry out Von Neumann analysis of the derived system. We show the Von Neumann analysis is greatly simplified by analyzing the spectral radius of the growth matrix correspond to the derived system when a certain RK method is used to solve it.\par  
An easy calculation shows that matrix $\b{A}$ has 2 complex eigenvalues: $\lambda_{1,2}=\pm i\sqrt{c\sigma |k|}$. Notice that the 2 eigenvalues are both pure imaginary, thus $A$ is similar to an antisymmetric matrix $\b{Q}$ in $\mathbb{R}$:
\begin{equation} \label{Asimilar}
\b{A}=\b{P}^{-1}\b{QP},\, \b{Q}=\left(  
\begin{array}{ccc}  
0 & \sqrt{c\sigma|k|} \\  
-\sqrt{c\sigma|k|} & 0 \\
\end{array}
\right),\, \b{P}=\left(  
\begin{array}{ccc}  
1 & 0 \\  
0 & \sqrt{\dfrac{\sigma|k|}{c}} \\
\end{array}
\right).
\end{equation}
Substitute \eqref{Asimilar} in \eqref{sys:Fourier}, we will get a new system which has equivalent stable condition as \eqref{sys:Fourier}：
\begin{equation} \label{sys:QFourier}
\partial_{t}\left(\begin{array}{c} \hat{u} \\
\hat{w} \end{array}\right) 
=
\b{Q}
\left(\begin{array}{c} \hat{u} \\
\hat{w} \end{array}\right),\, \left(\begin{array}{c} \hat{u} \\
\hat{w} \end{array}\right):=\b{P}\left(\begin{array}{c} \hat{u} \\
\hat{v} \end{array}\right).
\end{equation}
Therefore, focusing on system \eqref{sys:QFourier} is sufficient.\par

\begin{remark}
Note that
\[
\|w\|_2^2=\frac{\sigma}{c}\sum_{k\in [N]^*}|k||\hat{v}_k|^2
=\frac{\sigma}{c}\langle\mathcal{L}v, v\rangle.
\]
By observing energy \eqref{eq:energy}, the $H^{1/2}$ norm of $v$ is essential to the system while introducing matrix $\b{P}$ indeed implies that we use the $H^{1/2}$ norm for $v$.
\end{remark}

Besides being antisymmetric, $\b{Q}$ also has a special property which is crutial in the following analysis:
\begin{proposition}$\b{Q}^2=-c\sigma|k|\b{I}.$
\end{proposition}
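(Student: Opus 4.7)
The statement is a purely computational identity about the $2\times 2$ antisymmetric matrix $\b{Q}$ defined in \eqref{Asimilar}, so my plan is simply to verify it by direct multiplication. I would write $\b{Q} = \sqrt{c\sigma|k|}\,\b{J}$, where
\[
\b{J} = \begin{pmatrix} 0 & 1 \\ -1 & 0 \end{pmatrix}
\]
is the standard symplectic unit, and then use the well-known fact $\b{J}^2 = -\b{I}$, which is immediate from one $2\times 2$ matrix product. Pulling the scalar out of the square gives $\b{Q}^2 = c\sigma|k|\,\b{J}^2 = -c\sigma|k|\,\b{I}$, which is exactly the claim.

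If one prefers a coordinate-free viewpoint, the same conclusion can be reached by observing that $\b{Q}$ is real antisymmetric with eigenvalues $\pm i\sqrt{c\sigma|k|}$ (already noted in the preliminary analysis of $\b{A}$), so by the Cayley–Hamilton theorem applied to its characteristic polynomial $\lambda^2 + c\sigma|k| = 0$ one gets $\b{Q}^2 + c\sigma|k|\b{I} = 0$. Both routes are essentially the same length; the direct multiplication is more elementary and keeps the exposition self-contained, while the Cayley–Hamilton route makes transparent why the identity is to be expected and how it will couple later with the Runge–Kutta analysis (powers of $\b{Q}$ will reduce to scalar multiples of $\b{I}$ and $\b{Q}$).

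There is no real obstacle here; the only thing to double-check is sign bookkeeping in the off-diagonal entries of $\b{Q}$, since a sign error would flip the conclusion to $+c\sigma|k|\b{I}$. I would therefore write the two off-diagonal products explicitly once, to make clear that the cross terms $(\sqrt{c\sigma|k|})(-\sqrt{c\sigma|k|})$ on both diagonal entries of $\b{Q}^2$ give $-c\sigma|k|$, and that the new off-diagonal entries vanish because each involves multiplying a zero row/column entry. This identity will then be exploited in the subsequent Runge–Kutta stability analysis, where polynomials of $\tau\b{Q}$ collapse to linear combinations of $\b{I}$ and $\b{Q}$, drastically simplifying the growth matrix.
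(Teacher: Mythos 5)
Your proof is correct and matches the paper's approach: the paper simply states that the identity follows by direct calculation, which is exactly what your factorization $\b{Q}=\sqrt{c\sigma|k|}\,\b{J}$ with $\b{J}^2=-\b{I}$ carries out. The Cayley--Hamilton remark is a nice alternative but not needed; both routes verify the same elementary computation.
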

The proof follows by direct calculation.\par
To accomplish Von Neumann Analysis, we need to analyze the norm of the growth matrix (let it be $\b{M}$) when a certain RK method is used to solve \eqref{sys:QFourier}. Because $M$ is a polynomial (when explicit RK method is applied) or a rational function (when implicit RK method is applied) of $\tau \b{Q}$, we also need the connection between $\b{Q}$ and $\b{M}$. Using proposition 1, we can show that $\b{M}=a(\tau^2|k|)\b{I}+b(\tau^2|k|)\tau \b{Q}$, where $a(x),b(x)$ are constant coefficient rational functions, which is Lemma 1:
\begin{lemma}
For system \eqref{sys:QFourier}, any RK method applied to it reduces to a iteration:
\begin{equation}
\left(\begin{array}{c} \hat{u}^{n+1} \\
\hat{w}^{n+1} \end{array}\right)=\b{M}\left(\begin{array}{c} \hat{u}^n \\
\hat{w}^n \end{array}\right),
\end{equation}
where $\b{M}=a(\tau^2|k|)\b{I}+b(\tau^2|k|)\tau \b{Q}$, $\b{Q}$ is the one in \eqref{Asimilar} and $a(x),b(x)$ are constant coefficient rational functions.
\end{lemma}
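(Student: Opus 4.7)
The plan is to invoke the standard fact that for any (possibly implicit) Runge-Kutta method with Butcher tableau $(\b{p},\b{G},\b{w})$ applied to a constant-coefficient linear system $\b{y}'=\b{L}\b{y}$, the one-step update takes the closed form $\b{y}^{n+1}=R(\tau\b{L})\b{y}^{n}$, where the stability function
\begin{equation*}
R(z)=1+z\,\b{w}^{T}(\b{I}-z\b{G})^{-1}\b{1}
\end{equation*}
is a polynomial in $z$ when $\b{G}$ is strictly lower triangular (explicit case) and a rational function in $z$ otherwise (implicit case). Specializing $\b{L}=\b{Q}$, the growth matrix is $\b{M}=R(\tau\b{Q})$, so the lemma reduces to showing that $R(\tau\b{Q})$ has the claimed decomposition.

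First I would dispose of the polynomial case. Setting $\beta:=c\sigma|k|$ and iterating Proposition 1 gives
\begin{equation*}
(\tau\b{Q})^{2m}=(-\beta\tau^{2})^{m}\b{I},\qquad (\tau\b{Q})^{2m+1}=(-\beta\tau^{2})^{m}\,\tau\b{Q}.
\end{equation*}
For any polynomial $P(z)=\sum_{j}p_{j}z^{j}$, grouping the even and odd powers separately yields
\begin{equation*}
P(\tau\b{Q})=a_{P}(\tau^{2}|k|)\,\b{I}+b_{P}(\tau^{2}|k|)\,\tau\b{Q},
\end{equation*}
where $a_{P},b_{P}$ are polynomials in the single variable $\tau^{2}|k|$ whose coefficients depend only on the $p_{j}$ and on the constants $c,\sigma$. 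This settles the explicit case, where $R$ is itself a polynomial.

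To handle the implicit case, I would write $R=P/S$ with $P,S$ polynomials and apply the reduction just proved to both factors. In particular, $S(\tau\b{Q})=\alpha\b{I}+\gamma\,\tau\b{Q}$ for polynomials $\alpha,\gamma$ in $\tau^{2}|k|$. The key observation is that the two-dimensional subspace $\{a\b{I}+b\,\tau\b{Q}:a,b\in\mathbb{R}\}$ is closed under inversion: the identity
\begin{equation*}
(\alpha\b{I}+\gamma\,\tau\b{Q})(\alpha\b{I}-\gamma\,\tau\b{Q})=\bigl(\alpha^{2}+\gamma^{2}c\sigma|k|\tau^{2}\bigr)\b{I}
\end{equation*}
gives
\begin{equation*}
S(\tau\b{Q})^{-1}=\frac{\alpha\b{I}-\gamma\,\tau\b{Q}}{\alpha^{2}+\gamma^{2}c\sigma|k|\tau^{2}}.
\end{equation*}
Multiplying $P(\tau\b{Q})$ by this expression, applying $(\tau\b{Q})^{2}=-\beta\tau^{2}\b{I}$ once more to collect the cross term, and reading off the coefficients of $\b{I}$ and $\tau\b{Q}$ produces $\b{M}=R(\tau\b{Q})$ in the asserted form with rational $a,b$.

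The main obstacle is the invertibility of $S(\tau\b{Q})$ needed in the implicit case, that is, ensuring $\alpha^{2}+\gamma^{2}c\sigma|k|\tau^{2}\neq 0$. This is equivalent to requiring that the spectrum $\pm i\tau\sqrt{c\sigma|k|}$ of $\tau\b{Q}$ avoid the poles of the rational function $R$, which is automatic for all sufficiently small $\tau$ because those poles form a finite set depending only on the Butcher tableau. Everything else amounts to bookkeeping in the commutative polynomial algebra generated by $\b{I}$ and $\tau\b{Q}$, which by Proposition 1 is isomorphic to $\mathbb{R}[X]/(X^{2}+c\sigma|k|\tau^{2})$.
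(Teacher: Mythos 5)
Your proposal is correct and follows essentially the same route as the paper: write the growth matrix as a rational function of $\tau\b{Q}$, use $\b{Q}^{2}=-c\sigma|k|\b{I}$ to collapse even and odd powers onto the span of $\b{I}$ and $\tau\b{Q}$, and invert the denominator via the conjugate identity $(\alpha\b{I}+\gamma\tau\b{Q})(\alpha\b{I}-\gamma\tau\b{Q})=(\alpha^{2}+\gamma^{2}c\sigma|k|\tau^{2})\b{I}$. Your explicit remark on invertibility of the denominator (the spectrum of $\tau\b{Q}$ avoiding the poles of the stability function for small $\tau$) is a point the paper passes over silently, but it does not change the argument.
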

\begin{proof}
$\b{M}$ can be written in the following way:
\begin{equation}\label{M}
M=f(\tau \b{Q})[g(\tau \b{Q})]^{-1},
\end{equation}where $f(x),g(x)$ are constant coefficient polynomials. Due to proposition 1, we get
\begin{equation}
(\tau \b{Q})^2=-\tau^2|k|c\sigma \b{I}.
\end{equation}
Thus we can rewrite $f(\tau \b{Q}),g(\tau \b{Q})$ as:
\begin{equation}\label{fg}
f(\tau \b{Q})=a_1(\tau^2|k|)\b{I}+b_1(\tau^2|k|)\tau \b{Q},g(\tau \b{Q})=a_2(\tau^2|k|)\b{I}+b_2(\tau^2|k|)\tau \b{Q},
\end{equation}where $a_1(x),a_2(x),b_1(x),b_2(x)$ are also constant coefficient polynomials.
Still using proposition 1, we can get the explicit formula of $[g(\tau \b{Q})]^{-1}$. Observe that  
\begin{equation}
g(\tau \b{Q})(a_2(\tau^2|k|)I\b{I}-b_2(\tau^2|k|)\tau \b{Q})=[a_2^2(\tau^2|k|)+c\sigma\tau^2|k|b_2^2(\tau^2|k|)]\b{I}:=h(\tau^2|k|)\b{I}.
\end{equation}Thus, 
\begin{equation}\label{invg}
[g(\tau \b{Q})]^{-1}=\dfrac{a_2(\tau^2|k|)\b{I}-b_2(\tau^2|k|)\tau \b{Q}}{h(\tau^2|k|)}.
\end{equation}
Substitute \eqref{invg},\eqref{fg} in \eqref{M}, we get:
\begin{equation}
\begin{split}
M&=f(\tau \b{Q})[g(\tau \b{Q})]^{-1}\\
&=\left[a_1(\tau^2|k|)\b{I}+b_1(\tau^2|k|)\tau \b{Q}\right]\dfrac{a_2(\tau^2|k|)\b{I}-b_2(\tau^2|k|)\tau \b{Q}}{h(\tau^2|k|)}\\
&:=a(\tau^2|k|)\b{I}+b(\tau^2|k|)\tau \b{Q},
\end{split}
\end{equation}where
\begin{equation}
a(x)=\dfrac{a_1(x)a_2(x)-c\sigma xb_1(x)b_2(x)}{h(x)},
b(x)=\dfrac{b_1(x)a_2(x)-a_1(x)b_2(x)}{h(x)}.
\end{equation}
Because $a_1(x),a_2(x),b_1(x),b_2(x),h(x)$ are all polynomials, $a(x),b(x)$ are both rational functions.
\end{proof}
Now that $M$ has a specific form of $\b{M}=a(\tau^2|k|)\b{I}+b(\tau^2|k|)\tau \b{Q}$, we observed that there is a relationship between norm of $\b{M}$ and $\rho(\b{M})$, which is the following lemma:
\begin{lemma}
If matrix $\b{M}=p(\tau,|k|)\b{I}+q(\tau,|k|) \b{Q}$, then\begin{equation}
\rho(\b{M})=\sqrt{\rho(\b{M}^T\b{M})}=\|\b{M}\|_2,
\end{equation}where $\b{Q}$ is the one in \eqref{Asimilar} and $p(\tau,|k|),q(\tau,|k|)$ are any real functions.
\end{lemma}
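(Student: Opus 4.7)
The plan is to exploit the very simple algebraic structure imposed by Proposition 1: the matrix $\mathbf{Q}$ squares to a scalar multiple of the identity and is real antisymmetric. This will make $\mathbf{M} = p\mathbf{I} + q\mathbf{Q}$ a normal matrix (in fact, a scaled rotation), for which the spectral radius and spectral norm coincide.

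First, I would observe that the second equality $\sqrt{\rho(\mathbf{M}^T\mathbf{M})} = \|\mathbf{M}\|_2$ is just the standard characterization of the matrix $2$-norm for real matrices, so nothing needs to be proved there. The content is the first equality $\rho(\mathbf{M}) = \sqrt{\rho(\mathbf{M}^T\mathbf{M})}$, which I would establish by computing both sides explicitly in terms of $p$, $q$ and $|k|$.

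For the spectral radius side, I would diagonalize $\mathbf{Q}$: since $\mathbf{Q}^2 = -c\sigma|k|\mathbf{I}$ (Proposition 1), the eigenvalues of $\mathbf{Q}$ are $\pm i\sqrt{c\sigma|k|}$, so the eigenvalues of $\mathbf{M}$ are $p \pm i q\sqrt{c\sigma|k|}$, giving
\begin{equation*}
\rho(\mathbf{M}) = \sqrt{p^{2} + c\sigma|k|\, q^{2}}.
\end{equation*}
For the norm side, I would use antisymmetry of $\mathbf{Q}$ (so that $\mathbf{Q}^T = -\mathbf{Q}$) together with Proposition 1 to get
\begin{equation*}
\mathbf{M}^T\mathbf{M} = (p\mathbf{I} - q\mathbf{Q})(p\mathbf{I} + q\mathbf{Q}) = p^{2}\mathbf{I} - q^{2}\mathbf{Q}^{2} = \bigl(p^{2} + c\sigma|k|\, q^{2}\bigr)\mathbf{I}.
\end{equation*}
The cross term $pq(\mathbf{Q} - \mathbf{Q})$ vanishes automatically, which is exactly why normality holds. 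Taking the square root of the (only) eigenvalue of $\mathbf{M}^T\mathbf{M}$ then matches $\rho(\mathbf{M})$.

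Since both expressions reduce to the same scalar $\sqrt{p^{2} + c\sigma|k| q^{2}}$, the chain of equalities in the lemma follows. There is no serious obstacle here; the only thing to be careful about is ensuring that $p$ and $q$ are real so that $\mathbf{M}^T = p\mathbf{I} - q\mathbf{Q}$ (with no complex conjugation to worry about) and so that the cross term cancels exactly. This cancellation, driven by the antisymmetry $\mathbf{Q}^T = -\mathbf{Q}$, is the one place where the specific structure of $\mathbf{Q}$ is used, and it is what makes this lemma the key technical simplification that lets the subsequent Von Neumann analysis be reduced to estimating $\rho(\mathbf{M})$ rather than $\|\mathbf{M}\|_2$ directly.
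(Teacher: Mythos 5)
Your proposal is correct and follows essentially the same route as the paper's own proof: both compute $\b{M}^T\b{M}=(p^2+c\sigma|k|q^2)\b{I}$ via the antisymmetry $\b{Q}^T=-\b{Q}$ and Proposition 1, and both obtain $\rho(\b{M})=|p\pm iq\sqrt{c\sigma|k|}|$ from the eigenvalues of $\b{Q}$, matching the two quantities. The only difference is cosmetic (the paper writes the product in the order $(p\b{I}+q\b{Q})(p\b{I}-q\b{Q})$, which is harmless since the factors commute), and your explicit remark that this shows $\b{M}$ is normal is a nice touch not spelled out in the paper.
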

\begin{proof}
Because $\b{Q}^T=-\b{Q}$, an easy calculation tells us:\begin{equation}
\begin{split}
\b{M}^T\b{M}&=\left[p(\tau,|k|)\b{I}+q(\tau,|k|) \b{Q}\right]\left[p(\tau,|k|)\b{I}-q(\tau,|k|) \b{Q}\right]\\
&=p^2(\tau,|k|)\b{I}-q^2(\tau,|k|)\b{Q}^2\\
&=\left[p^2(\tau,|k|)+c\sigma|k|q^2(\tau,|k|)\right]\b{I}.
\end{split}
\end{equation}Thus, 
\begin{equation}
\sqrt{\rho(\b{M}^T\b{M})}=\sqrt{p^2(\tau,|k|)+c\sigma|k|q^2(\tau,|k|)}.
\end{equation}
For the 2 eigenvalues of matrix $\b{Q}$ are $\lambda_{1,2}=\pm i\sqrt{c\sigma|k|}$, thus
\begin{equation}
\rho(\b{M})=\left|p(\tau,|k|)+iq(\tau,|k|)\sqrt{c\sigma|k|}\right|=\sqrt{p^2(\tau,|k|)+c\sigma|k|q^2(\tau,|k|)}.
\end{equation}Hence, we have shown that
\begin{gather*}
\rho(\b{M})=\sqrt{\rho(\b{M}^T\b{M})}=\|\b{M}\|_2.
\end{gather*}
\end{proof}
This observation is meaningful for it warrants that we can estabilsh the framework by merely analizing $\rho(M)$. The advantage of analysis $\rho(M)$ instead of $\|M\|_2$ directly lies in that although they are exactly the same in this case, there are already general results say that the spectral radius of growth matrix can be explicitly represented by $G,w,p$. Due to this reason, we will then establish our general framework by analysis on eigenvalues of $M$.

\subsection{Stability Analysis}
Now the rest of work only lies in the analysis of eigenvalues of matrix $\b{M}$. Let $\nu=c\sigma|k|$, then two eigenvalues of matrix $\b{Q}$ are $\lambda_{1,2}=\pm i\sqrt{c\sigma|k|}=\pm i\sqrt{\nu}$, assume that matrix $\b{P}_1$ satisfies 
\begin{equation} \label{Qsimilar}
\b{Q}=\b{P}_1^{-1}\b{\Lambda}_1 \b{P}_1,\, \b{\Lambda}_1=\mathrm{diag}\{\lambda_1,\lambda_2\}.
\end{equation}
Substitute \eqref{Qsimilar} in \eqref{sys:QFourier}, we can get a decoupled system:
\begin{equation} \label{sys:Qsimilar}
\partial_{t}\left(\begin{array}{c} \hat{u}_1 \\
\hat{v}_1 \end{array}\right) 
=\b{\Lambda}_1\left(\begin{array}{c} \hat{u}_1 \\
\hat{v}_1 \end{array}\right),\, 
\left(\begin{array}{c} \hat{u}_1\\ \hat{v}_1 \end{array}\right):=
\b{P_1}\left(\begin{array}{c} \hat{u}\\ \hat{w} \end{array}\right). 
\end{equation}  
Given that $\b{\Lambda}_1$ is similar to $\b{Q}$, so when using the same RK method to solve \eqref{sys:QFourier} and \eqref{sys:Qsimilar} respectively, the spectral radius of growth matrices are exactly the same. Remember that section 3.1 has already shown that anaylsis on $\rho(\b{M})$ is sufficient, because $\rho(\b{M})=\|\b{M}\|_2$, thus it is also sufficient to focus on \eqref{sys:Qsimilar}. \par
Now that $\b{\Lambda}_1$ is a diagonal complex matrix, thus function $\hat{u}_1,\hat{v}_1$ are decoupled, and conclusion about the eigenvalues of growth matrix for any RK method, namely Lemma 3, can be directly used to derive the explicit formula of eigenvalues:
\begin{lemma}
Let $\b{e}$ stands for vector of ones. For the linear test equation $y'=\lambda y$, the RK method applied to this equation reduces to $y_{n+1}=f(\tau\lambda)y_n$, with $f(z)$ given by
\begin{equation} \label{f(z)}
f(z)=1+z\b{w}^T(\b{I}-z\b{G})^{-1}\b{e}=\dfrac{\mathrm{det}\left(\b{I}-z\b{G}+z\b{e}\b{w}^T\right)}{\mathrm{det}\left(\b{I}-z\b{G}\right)}.
\end{equation}
\end{lemma}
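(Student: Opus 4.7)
The plan is to apply the standard definition of a Runge-Kutta method to the scalar linear test equation and read off the stability function, then convert the resulting expression to the determinant form using the matrix determinant lemma (Sylvester's identity).

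First I would write down the $s$ stages of the method. For $y' = \lambda y$ with step size $\tau$, the internal stages $Y_1, \ldots, Y_s$ satisfy $Y_i = y_n + \tau \sum_{j} G_{ij} \lambda Y_j$. Collecting the stages into a vector $\mathbf{Y}$ and setting $z = \tau \lambda$, this becomes the linear system
\begin{equation*}
(\mathbf{I} - z\mathbf{G})\mathbf{Y} = y_n \mathbf{e},
\end{equation*}
so that $\mathbf{Y} = y_n (\mathbf{I} - z\mathbf{G})^{-1}\mathbf{e}$, provided $z$ is not a reciprocal eigenvalue of $\mathbf{G}$ (which is automatic for $\mathbf{G}$ strictly lower triangular in the explicit case, and excludes a finite set of $z$ in the implicit case). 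Substituting into the update formula $y_{n+1} = y_n + \tau \mathbf{w}^T \lambda \mathbf{Y}$ then gives $y_{n+1} = \left[1 + z\mathbf{w}^T(\mathbf{I} - z\mathbf{G})^{-1}\mathbf{e}\right] y_n$, which is the first equality in \eqref{f(z)}.

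For the second equality I would invoke the matrix determinant lemma: for any invertible matrix $\mathbf{A}$ and vectors $\mathbf{u},\mathbf{v}$,
\begin{equation*}
\det(\mathbf{A} + \mathbf{u}\mathbf{v}^T) = \det(\mathbf{A})\bigl(1 + \mathbf{v}^T \mathbf{A}^{-1}\mathbf{u}\bigr).
\end{equation*}
Applying this with $\mathbf{A} = \mathbf{I} - z\mathbf{G}$, $\mathbf{u} = z\mathbf{e}$, and $\mathbf{v} = \mathbf{w}$ immediately yields
\begin{equation*}
\det(\mathbf{I} - z\mathbf{G} + z\mathbf{e}\mathbf{w}^T) = \det(\mathbf{I} - z\mathbf{G})\bigl(1 + z\mathbf{w}^T(\mathbf{I} - z\mathbf{G})^{-1}\mathbf{e}\bigr),
\end{equation*}
so dividing gives the rational expression in \eqref{f(z)}. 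There is no real obstacle here: the only item worth being careful about is noting that both representations agree as rational functions of $z$, so the determinant form extends $f$ analytically across the exceptional values of $z$ where $\mathbf{I} - z\mathbf{G}$ is singular but the combination $\det(\mathbf{I} - z\mathbf{G} + z\mathbf{e}\mathbf{w}^T)/\det(\mathbf{I} - z\mathbf{G})$ remains well-defined.
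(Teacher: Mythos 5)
Your proof is correct and complete: the derivation of the stages, the update formula, and the application of the matrix determinant lemma are all the standard argument for the Runge--Kutta stability function. The paper does not prove this lemma itself but simply cites it from the reference \cite{hairer1996solving}, and your argument is essentially the textbook proof given there, so there is nothing to reconcile.
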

This Lemma can be found in \cite{hairer1996solving}. By Lemma 3, the 2 eigenvalues of the growth matrix are $f(\tau\lambda_1),f(\tau\lambda_2)$, namely
\begin{equation}\label{mu1mu2}
\mu_1=f(\tau\lambda_1)=\dfrac{\mathrm{det}\left(\b{I}-\tau\lambda_1\b{G}+\tau\lambda_1\b{e}\b{w}^T\right)}{\mathrm{det}\left(\b{I}-\tau\lambda_1\b{G}\right)},\mu_2=f(\tau\lambda_2)=\dfrac{\mathrm{det}\left(\b{I}-\tau\lambda_2\b{G}+\tau\lambda_2\b{e}\b{w}^T\right)}{\mathrm{det}\left(\b{I}-\tau\lambda_2\b{G}\right)}.
\end{equation}\par
For we already have formula \eqref{mu1mu2}, to derive a formula for $|\mu_1|,|\mu_2|$, we need the following lemma: 
\begin{lemma}
Complex matrix $\b{C}=\b{A}+i\b{B}$ where $\b{A}$ and $\b{B}$ are real matrices satisfy $\b{AB}=\b{BA}$, then $\left|\mathrm{det}\left(\b{C}\right)\right|=\sqrt{\left|\mathrm{det}(\b{A}^2+\b{B}^2)\right|}$.
\end{lemma}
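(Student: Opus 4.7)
The plan is to exploit the matrix factorization
\[
\b{A}^2 + \b{B}^2 \;=\; (\b{A}+i\b{B})(\b{A}-i\b{B}),
\]
which is where the commutativity hypothesis $\b{A}\b{B} = \b{B}\b{A}$ enters essentially: expanding the right-hand side gives $\b{A}^2 - i\b{A}\b{B} + i\b{B}\b{A} + \b{B}^2$, and the two cross terms cancel only because $\b{A}$ and $\b{B}$ commute. This factorization converts the determinant on the right-hand side of the claim into a product to which multiplicativity of $\det$ applies.

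Next I would take determinants of both sides of the factorization to obtain
\[
\det(\b{A}^2+\b{B}^2) \;=\; \det(\b{A}+i\b{B})\,\det(\b{A}-i\b{B}) \;=\; \det(\b{C})\,\det(\b{A}-i\b{B}).
\]
Since $\b{A}$ and $\b{B}$ are real, $\b{A}-i\b{B}$ is the entrywise complex conjugate of $\b{C} = \b{A}+i\b{B}$; and because the determinant is a polynomial with integer coefficients in the matrix entries, it commutes with entrywise conjugation, giving $\det(\b{A}-i\b{B}) = \overline{\det(\b{C})}$. Substituting, I obtain $\det(\b{A}^2+\b{B}^2) = \det(\b{C})\,\overline{\det(\b{C})} = |\det(\b{C})|^2$, which is automatically a non-negative real number (so the absolute value inside the square root in the statement is present only for formality). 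Taking the square root yields the claim.

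The only subtle point is not a calculation but the observation at the start: the hypothesis $\b{A}\b{B}=\b{B}\b{A}$ is indispensable, since without it the factorization picks up a residual cross term $i(\b{B}\b{A}-\b{A}\b{B})$ that need not vanish, and the conclusion can genuinely fail. Beyond this, the proof is a short chain of standard determinant manipulations and I do not anticipate any substantive obstacle, so the whole argument should fit in a few lines.
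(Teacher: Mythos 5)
Your proof is correct and follows essentially the same route as the paper: both rest on the factorization $\b{C}\overline{\b{C}}=(\b{A}+i\b{B})(\b{A}-i\b{B})=\b{A}^2+\b{B}^2$ (valid by commutativity) together with $\det(\overline{\b{C}})=\overline{\det(\b{C})}$. Your added remark that $\det(\b{A}^2+\b{B}^2)=|\det(\b{C})|^2$ is automatically a nonnegative real, so the absolute value in the statement is redundant, is a small but accurate refinement.
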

\begin{proof}
$\left|\mathrm{det}\left(\b{C}\right)\right|=\left|\mathrm{det}\left(\b{\overline{C}}\right)\right|=\sqrt{\left|\mathrm{det}\left(\b{C}\overline{\b{C}}\right)\right|}$. Meanwhile
$\b{C}\overline{\b{C}}=(\b{A}+i\b{B})(\b{A}-i\b{B})=\b{A}^2+\b{B}^2$, thus $\left|\mathrm{det}\left(\b{C}\right)\right|=\sqrt{\left|\mathrm{det}(\b{A}^2+\b{B}^2)\right|}$.
\end{proof} 
Using lemma 4, let $\b{A}=\b{I}$,\, $\b{B}=\tau\sqrt{\nu}\b{G},\tau\sqrt{\nu}(\b{G}-\b{e}\b{w}^T)$, we can get
\begin{equation} \label{eigmodulus}
|\mu_1|=|\mu_2|=\sqrt{\dfrac{\left|\mathrm{det}(\b{I}+\tau^2\nu(\b{G}-\b{e}\b{w}^T)^2)\right|}{\left|\mathrm{det}(\b{I}+\tau^2\nu \b{G}^2)\right|}}.
\end{equation} Denote $\psi(\tau,\nu)$ as $\sqrt{\dfrac{\left|\mathrm{det}(\b{I}+\tau^2\nu(\b{G}-\b{e}\b{w}^T)^2)\right|}{\left|\mathrm{det}(\b{I}+\tau^2\nu \b{G}^2)\right|}}$.
For explicit RK methods, matrix $\b{G}$ is lower triangular matrix, thus $\mathrm{det}(\b{I}+\tau^2\nu\b{G}^2)=1$, and formula \eqref{eigmodulus} reduces to
\begin{equation}
|\mu_1|=|\mu_2|=\sqrt{\left|\mathrm{det}(\b{I}+\tau^2\nu(\b{G}-\b{e}\b{w}^T)^2)\right|},
\end{equation} which means that $|\mu_1|^2$, or $|\mu_2|^2$ is a constant polynomial coefficient of $\tau^2\nu$, and the polynomial is relevant to $\b{G},\b{w}$. This indeed makes sense, because when using explicit RK-p method to solve system \eqref{sys:Qsimilar}, the following formula still holds:
\begin{equation}
\mu_{1,2}=\sum_{m=0}^p\dfrac{1}{m!}(\tau \lambda_{1,2})^m=\sum_{m=0}^p\dfrac{1}{m!}(\pm i\tau\sqrt{\nu})^m,
\end{equation}which is a well known property of explicit RK method. Therefore, $|\mu_1|^2$, or $|\mu_2|^2$ is a constant polynomial coefficient of $\tau^2\nu$. Besides, when $p\geqslant 3$, it is also well known that the absolute stable region of RK-p method contains a part of imaginary axis $[-iC_1(p),iC_1(p)]$, thus asking
\begin{equation}\label{CFL_RKp}
-C_1(p)\leqslant \pm \tau\sqrt{\nu}\leqslant C_1(p)
\end{equation} is sufficient to ensure the stability of explicit RK-p method when solving system \eqref{sys:Qsimilar}. Combining this observation and formula \eqref{eigmodulus}, we can prove the following theorem which characterizes the stability condition for any RK method:  
\begin{theorem} There are 2 cases in this theorem:
\begin{enumerate}
\item For any RK method, if a CFL condition holds, i.e. $\tau\leqslant Ch$ for some positive real number $C$, the scheme for system \eqref{sys:Qsimilar}, \eqref{sys:Fourier} or \eqref{sys:QFourier} is stable. More specifically, let $\b{e}$ stand for vector of ones, then 
\begin{enumerate}
\item If $\mathrm{tr}(\b{G}^2)>\mathrm{tr}\left((\b{G}-\b{e}\b{w}^T)^2\right)$, then $\psi(\tau,\nu)<1$ when $\tau$ goes to 0, which means strong stability.
\item If $\mathrm{tr}(\b{G}^2)\leqslant\mathrm{tr}\left((\b{G}-\b{e}\b{w}^T)^2\right)$, then 
$\psi(\tau,\nu)\leqslant1+L\tau+O(\tau^2)$ when $\tau$ goes to 0, where $0\leqslant L\leqslant\mathrm{tr}\left((\b{G}-\b{e}\b{w}^T)^2-\b{G}^2\right)c\sigma C\pi/2$, which means weak stability.
\end{enumerate} 
\item For any RK method whose absolute stable region contains a part of imaginary axis $[-C_1i,C_1i]$, then there exists $C_2>0$ such that when
\begin{equation}\label{eq:newcfl}
\tau\leqslant C_2\sqrt{h},
\end{equation} 
the scheme is strongly stable.
\end{enumerate}
\end{theorem}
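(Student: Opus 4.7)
My plan is to treat the two parts of the theorem in sequence, each using the explicit formula for $|\mu_1|=|\mu_2|=\psi(\tau,\nu)$ derived in \eqref{eigmodulus}, together with the relation $\nu=c\sigma|k|$ and the Nyquist bound $|k|\leq N/2=\pi/h$, which gives the uniform bound $\nu\leq c\sigma\pi/h$.

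\textbf{Part 1.} Under the CFL condition $\tau\le Ch$, the combined quantity $\tau^2\nu$ is bounded by $C^2 c\sigma\pi h$, hence goes to $0$ as $\tau\to 0$ uniformly in $k\in[N]^*$. I would Taylor expand both determinants using $\det(\b{I}+z\b{M})=1+z\,\mathrm{tr}(\b{M})+O(z^2)$ with $z=\tau^2\nu$, giving
\begin{equation*}
\psi(\tau,\nu)^2 = \frac{1+\tau^2\nu\,\mathrm{tr}\!\left((\b{G}-\b{e}\b{w}^T)^2\right)+O((\tau^2\nu)^2)}{1+\tau^2\nu\,\mathrm{tr}(\b{G}^2)+O((\tau^2\nu)^2)}
= 1+\tau^2\nu\,\mathrm{tr}\!\left((\b{G}-\b{e}\b{w}^T)^2-\b{G}^2\right)+O((\tau^2\nu)^2).
\end{equation*}
For case (a), the leading-order coefficient is strictly negative, so $\psi<1$ for $\tau$ small, yielding strong stability. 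For case (b), applying $\sqrt{1+x}\leq 1+x/2$ for small $x\geq 0$ gives $\psi\leq 1+\tfrac{1}{2}\tau^2\nu\,\mathrm{tr}((\b{G}-\b{e}\b{w}^T)^2-\b{G}^2)+O((\tau^2\nu)^2)$; substituting the uniform bound $\tau^2\nu\leq \tau\cdot Cc\sigma\pi$ produces the claimed $L\tau+O(\tau^2)$ inequality with the stated value of $L$.

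\textbf{Part 2.} Here I would use the connection established before Theorem~1: for the decoupled system \eqref{sys:Qsimilar}, each component is a linear test equation $y'=\lambda_{1,2}y$ with purely imaginary $\lambda_{1,2}=\pm i\sqrt{\nu}$, so by Lemma~3 the eigenvalues of the growth matrix are exactly $f(\tau\lambda_{1,2})=f(\pm i\tau\sqrt{\nu})$. The hypothesis that the absolute stability region contains $[-iC_1,iC_1]$ is precisely $|f(iy)|\leq 1$ for $|y|\leq C_1$; therefore whenever $\tau\sqrt{\nu}\leq C_1$ we have $\rho(\b{M})\leq 1$, and then $\|\b{M}\|_2=\rho(\b{M})\le 1$ by Lemma~2. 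Using the uniform bound $\nu\leq c\sigma\pi/h$, the condition $\tau\sqrt{\nu}\leq C_1$ is implied by $\tau\leq C_2\sqrt{h}$ with $C_2 = C_1/\sqrt{c\sigma\pi}$, which is the advertised modified CFL \eqref{eq:newcfl}.

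\textbf{Main obstacles.} Part 2 is essentially book-keeping once one identifies that $\pm i\sqrt{\nu}$ lies in the imaginary axis segment covered by the stability region; the only subtle ingredient is that $\|\b{M}\|_2=\rho(\b{M})$, which was already secured by Lemma~2 and thereby converts eigenvalue control into genuine $\ell^2$ contractivity. The trickier bookkeeping is in Part 1(b): one must verify that the $O((\tau^2\nu)^2)$ remainder in the Taylor expansion of $\psi^2$ does not secretly beat the linear term under the CFL regime. This requires using $\tau^2\nu\leq C\tau\cdot c\sigma\pi$ once more so that the remainder is $O(\tau^2)$, not merely $O((\tau^2\nu)^2)$ which, if $\nu$ were allowed to grow faster than $1/h$, could dominate. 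Once the Nyquist-level bound on $\nu$ is enforced throughout, both statements follow directly from the explicit determinant formula \eqref{eigmodulus}.
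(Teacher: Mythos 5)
Your proposal is correct and takes essentially the same route as the paper's proof: Part 1 is the same trace-based Taylor expansion of the determinants in \eqref{eigmodulus} combined with the Nyquist bound $|k|\le \pi/h$ (so $\tau^2\nu=O(\tau)$ under $\tau\le Ch$), and Part 2 is the same reduction to $\tau\sqrt{\nu}\le C_1$ giving $C_2=C_1/\sqrt{c\sigma\pi}$, with the conversion from spectral radius to $\ell^2$ norm already supplied by Lemma 2. Your remarks on controlling the $O((\tau^2\nu)^2)$ remainder and on explicitly invoking Lemma 2 and Lemma 3 make the bookkeeping slightly more careful than the paper's, but the argument is the same.
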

\begin{proof}
We first aim to prove statement (a) and (b). Consider $\mathrm{det}\left(\b{I}+\tau \b{X}\right)$, in which $\tau$ is a small number and $\b{X}$ is a constant matrix. Suppose that the $n$ eigenvalues (counting multipility) are $\lambda_1,\lambda_2,...,\lambda_n$, then 
the determinant of $\b{X}$ is $\prod_{i=1}^n\lambda_i$. And,
\begin{equation}\label{determinant}
\mathrm{det}\left(\b{I}+\tau \b{X}\right)=\prod_{i=1}^n\left(1+\tau\lambda_i\right).
\end{equation}
Expand the R.H.S. of \eqref{determinant}, we can get 
\begin{equation}\label{smallnumber}
\begin{split}
\mathrm{det}\left(\b{I}+\tau \b{X}\right)&=1+\sum_{i=1}^n\lambda_i\tau+O(\tau^2)\\
&=1+\mathrm{tr}\left(\b{X}\right)\tau+O(\tau^2).
\end{split}
\end{equation}
Recall that $\nu=c\sigma|k|$ and $|k|h\leqslant\pi$, we can get
\begin{equation}
\begin{split}
\tau\nu=c\sigma|k|\tau&\leqslant c\sigma\pi\tau/h\\
&\leqslant c\sigma C\pi,
\end{split}
\end{equation} 
thus $\tau\nu=O(1)$ and $\tau^2\nu=O(\tau)$, we can directly use \eqref{smallnumber} in $\psi(\tau,\nu)$, i.e.
\begin{equation}
\begin{split}
\psi(\tau,v)&=\sqrt{\dfrac{\left|\mathrm{det}(\b{I}+\tau^2\nu(\b{G}-\b{e}\b{w}^T)^2)\right|}{\left|\mathrm{det}(\b{I}+\tau^2\nu \b{G}^2)\right|}}\\
&=\sqrt{\dfrac{\left|1+\mathrm{tr}\left((\b{G}-\b{e}\b{w}^T)^2\right)\tau^2\nu+O(\tau^2)\right|}{\left|1+\mathrm{tr}\left(\b{G}^2\right)\tau^2\nu+O(\tau^2)\right|}}\\
&=\sqrt{1+\left[\mathrm{tr}\left((\b{G}-\b{e}\b{w}^T)^2\right)-\mathrm{tr}\left(\b{G}^2\right)\right]\tau^2\nu+O(\tau^2)}\\
&=1+\dfrac{\mathrm{tr}\left((\b{G}-\b{e}\b{w}^T)^2\right)-\mathrm{tr}\left(\b{G}^2\right)}{2}\tau^2\nu+O(\tau^2) 
\end{split}.
\end{equation}
Then we consider 2 cases respectively:
\begin{enumerate}
\item If $\mathrm{tr}(\b{G}^2)>\mathrm{tr}\left((\b{G}-\b{e}\b{w}^T)^2\right)$, then $\psi(\tau,\nu)<1$ when $\tau$ goes to 0, which means strong stability.
\item If $\mathrm{tr}(\b{G}^2)\leqslant\mathrm{tr}\left((\b{G}-\b{e}\b{w}^T)^2\right)$, subsitute (1.9) in the last equality of (1.10), we can show that 
$\psi(\tau,\nu)\leqslant1+L\tau+O(\tau^2)$ when $\tau$ goes to 0, where
\begin{equation}
L=\dfrac{\mathrm{tr}\left((\b{G}-\b{e}\b{w}^T)^2-\b{G}^2\right)\nu C\pi}{2},
\end{equation}which means weak stability.
\end{enumerate}
For the third statement, we first consider the range of $\tau\sqrt{\nu}$:
\begin{equation}
\begin{split}
\tau\sqrt{\nu}&=\tau\sqrt{c\sigma|k|}\\
&=\tau\sqrt{c\sigma\pi/h}.
\end{split}
\end{equation}Thus, if $\tau\sqrt{c\sigma\pi/h}\leqslant C_1$, namely
\begin{equation}
\tau\leqslant\dfrac{C_1}{\sqrt{c\sigma\pi}}\sqrt{h}:=C_2\sqrt{h},
\end{equation}the scheme is stable.
\end{proof}
To check the validity of Theorem 1, we give two examples of RK method, who are semi-implicit RK method and explicit RK method respectively. We can find that the condition we need to ensure stability may be weaker than $\tau\leqslant Ch$ when using a certain RK method, even the stability may be unconditionally strong stability. In addition, a conditon claimed in the second statement of Theorem 1, namely $\tau\leqslant C\sqrt{h}$, can also be observed.
\begin{enumerate}[(a)]
\item Semi-implicit RK2: weighted Euler. The tableau of weighted Euler is:\\
\[
\renewcommand\arraystretch{1.2}
\begin{array}
{c|cc}
0 &0 &0 \\
1& 1-\delta& \delta\\
\hline
& 1-\delta &\delta
\end{array}
\]
where we can see that
\begin{equation*}
\b{G}=
\begin{pmatrix}
0 & 0\\
1-\delta & \delta
\end{pmatrix},\b{w}^T=
(1-\delta,\delta).
\end{equation*}
Substitute them in $\psi(\tau,\nu)$, we can get 
\begin{equation}
\psi(\tau,\nu)=\sqrt{\dfrac{1+(1-\delta)^2\tau^2\nu}{1+\delta^2\tau^2\nu}}.
\end{equation} 
Although we have proved that $\tau\leqslant Ch$ can ensure stability, we can also find that if $\delta\geqslant\dfrac{1}{2}$, then $\psi(\tau,\nu)\leqslant 1$, which means unconditionally strong stability. This means that condition of $\tau\leqslant Ch$ is not a sharp one.
\item Explicit RK4. The tableau of explicit RK4 is:\\
\[
\renewcommand\arraystretch{1.2}
\begin{array}
{c|cccc}
0\\
\frac{1}{2} & \frac{1}{2}\\
\frac{1}{2} &0 &\frac{1}{2} \\
1& 0& 0& 1\\
\hline
& \frac{1}{6} &\frac{1}{3} &\frac{1}{3} &\frac{1}{6} 
\end{array}
\]
where we can see that
\begin{equation*}
\b{G}=
\begin{pmatrix}
0 & 0 & 0 & 0\\
\dfrac{1}{2} & 0 & 0 &0\\
0 & \dfrac{1}{2} & 0 &0\\
0 & 0 & 1 & 0\\
\end{pmatrix},\b{w}^T=\left(\dfrac{1}{6},\dfrac{1}{3},\dfrac{1}{3},\dfrac{1}{6}\right).
\end{equation*}
Notice that the absolute stable region of explicit RK-4 contains a part of imaginary axis, by Theorem 1, we should expect that there exists a CFL condition like $\tau\leqslant C\sqrt{h}$. Because this RK method is explicit, so $\mathrm{det}(\b{I}+\tau^2\nu \b{G}^2)=1$, and we only need to compute the numerator of $\psi(\tau,\nu)$.
Substitute $\b{G},\b{w}^T$ in $\psi(\tau,\nu)$, and let $z=\tau^2\nu$, then we can get 
\begin{equation}
\rho(\tau,\nu)=\sqrt{\mathrm{det}(1+z(G-eb^T)^2)}=\sqrt{\dfrac{z^4}{576}-\dfrac{z^3}{72}+1}.
\end{equation}
If we require $\psi(\tau,\nu)\leqslant1$, namely we ask the scheme to be strongly stable, we can get $z\leqslant 8$, which is equivalent to ask $\tau\leqslant C\sqrt{h}$ where $C$ depends upon $\nu$. This is the desirable CFL condition.
\end{enumerate}

\subsection{High frequency regime}
The behavior of high frequency waves are usually considered in numerical research for its different phenomena which lower frequency waves don't have. For example, \cite{jefferis2015computing} developed numerical methods for high frequency solutions of general symmetric hyperbolic systems, and \cite{jin2008computation} focuses on the Liouville equation of geometric optics coupled with the Geometric Theory of Diffraction (GTD). Both of them use a WKB kind initial data, i.e. $u(x,0)=u_0(x)e^{iS_0(x)/\epsilon}$, where the scaled wavelength $\epsilon=L/K$ is small ($L$ is the considered length scale while $K$ is the typical wavenumber). In this paper, to capture the dynamics of such WKB kind initial data, a rescaling is done:
\begin{gather}
\left\{
\begin{split}
& u_t=\sigma \Lambda v,~~(x, t)\in \mathbb{T}_1\times [0,\infty)\\
& \epsilon v_t=-c\, u,~~(x, t)\in \mathbb{T}_1\times [0,\infty)
\end{split}
\right.
\end{gather}
with $\mathbb{T}_1=\mathbb{R}/\mathbb{Z}$ and the corresponding equation is:
\begin{equation}\label{rescaling}
\epsilon\partial_{tt}u=-\nu H\partial_xu.
\end{equation}
As what we did in the first section, multiplying by $u_t$ on both sides of \eqref{rescaling}, and intergrating over $\theta$ yields
\[
\frac{d}{dt}\int_{\mathbb{T}_1} \left(\epsilon|u_t|^2+ \frac{1}{2}\mu u\Lambda u \right) d\theta=0,
\]
which means the energy\[\int_{\mathbb{T}_1} \left(\epsilon|u_t|^2+ \frac{1}{2}\mu u\Lambda u \right) d\theta\]
is a conserved energy.
On the Fourier side, the second order system is
\begin{equation}\label{highfreq}
\partial_{t}\left(\begin{array}{c} \hat{u} \\
\hat{v} \end{array}\right) 
=
\left(  
\begin{array}{ccc}  
0 & \sigma |k| \\  
-\dfrac{c}{\epsilon} & 0 \\
\end{array}
\right)
\left(\begin{array}{c} \hat{u} \\
\hat{v} \end{array}\right):=A_1\left(\begin{array}{c} \hat{u} \\
\hat{v} \end{array}\right),
\end{equation}then the general framework of RK method for this system can be established in the same way, as long as parameter $c$ is replaced by $c/\epsilon$. Thus using the eigenvalues of $A_1$ instead of $A$ and rescaling $L$ in (b) of Theorem 1 are sufficient to derive the new stable condition, which can be summarized in the following theorem:
\begin{theorem} There are 2 cases in this theorem:
\begin{enumerate}
\item For any RK method, if a CFL condition holds, i.e. $\tau\leqslant \epsilon Ch$ for some positive real number $C$ irrelevant to $\epsilon$, the scheme for system \eqref{sys:Qsimilar}, \eqref{sys:Fourier} or \eqref{sys:QFourier} is stable. More specifically, let $\b{e}$ stand for vector of ones, then 
\begin{enumerate}
\item If $\mathrm{tr}(\b{G}^2)>\mathrm{tr}\left((\b{G}-\b{e}\b{w}^T)^2\right)$, then $\psi(\tau,\nu)<1$ when $\tau$ goes to 0, which means strong stability.
\item If $\mathrm{tr}(\b{G}^2)\leqslant\mathrm{tr}\left((\b{G}-\b{e}\b{w}^T)^2\right)$, then 
$\psi(\tau,\nu)\leqslant1+L\tau+O(\tau^2)$ when $\tau$ goes to 0, where $0\leqslant L\leqslant\mathrm{tr}\left((\b{G}-\b{e}\b{w}^T)^2-\b{G}^2\right)c\sigma C\pi/2\epsilon$, which means weak stability.
\end{enumerate} 
\item For any RK method whose absolute stable region contains a part of imaginary axis $[-C_1i,C_1i]$, then there exists $C_2>0$ such that when\begin{equation}\label{optimalstrategy}
\tau\leqslant C_2\sqrt{\epsilon h},
\end{equation} 
the scheme is strongly stable.
\end{enumerate}
\end{theorem}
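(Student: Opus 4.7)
The plan is to exploit the observation that the rescaled high-frequency system \eqref{highfreq} has exactly the same algebraic structure as the constant-coefficient system \eqref{sys:Fourier}, with $c$ replaced by $c/\epsilon$. Consequently the entire machinery of Sections 3.1--3.2 transplants without modification, provided that we consistently propagate the substitution $\nu \mapsto \nu/\epsilon$ through the estimates, where as before $\nu = c\sigma|k|$. The task is then to perform the bookkeeping carefully and read off the appropriate $\epsilon$-dependent CFL constants.

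First I would redo the similarity reduction \eqref{Asimilar} with $A_1$ in place of $A$: the two eigenvalues become $\pm i\sqrt{\nu/\epsilon}$, and the antisymmetric factor $Q_\epsilon$ still satisfies $Q_\epsilon^2 = -(\nu/\epsilon)\b{I}$, so Proposition 1 holds. Lemmas 1 and 2 then yield, without change, that any RK method produces a growth matrix $\b{M} = a(\tau^2\nu/\epsilon)\b{I} + b(\tau^2\nu/\epsilon)\tau Q_\epsilon$ with $\|\b{M}\|_2 = \rho(\b{M})$. Applying Lemmas 3 and 4 as in the previous theorem gives the growth factor
\[
\psi_\epsilon(\tau,\nu) = \sqrt{\dfrac{\left|\det\bigl(\b{I} + (\tau^2\nu/\epsilon)(\b{G}-\b{e}\b{w}^T)^2\bigr)\right|}{\left|\det\bigl(\b{I} + (\tau^2\nu/\epsilon)\b{G}^2\bigr)\right|}}.
\]

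For statement 1, the CFL bound $\tau \leq \epsilon C h$ together with $|k|h \leq \pi$ yields $\tau\nu/\epsilon \leq c\sigma C \pi$ and $\tau^2\nu/\epsilon = O(\tau)$. The trace-based expansion $\det(\b{I}+\tau \b{X}) = 1 + \mathrm{tr}(\b{X})\tau + O(\tau^2)$ used in the proof of Theorem 1 then transfers verbatim to $\psi_\epsilon$ and reproduces the dichotomy between strong stability (a) and weak stability (b), with the bound on $L$ in (b) inheriting the expected $1/\epsilon$ factor. For statement 2, the assumption that the absolute stability region contains the segment $[-iC_1,iC_1]$ reduces the problem to enforcing $\tau|\lambda_{1,2}| = \tau\sqrt{\nu/\epsilon} \leq C_1$; substituting $|k| \leq \pi/h$ turns this into $\tau \leq \frac{C_1}{\sqrt{c\sigma \pi}}\sqrt{\epsilon h} =: C_2 \sqrt{\epsilon h}$, which is \eqref{optimalstrategy}.

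There is no genuine obstacle; the argument is a careful rescaling of Theorem 1 rather than a new result. The only point that deserves attention is to check that the Taylor expansion used to obtain the leading-order behavior of $\psi_\epsilon$ remains valid under the bound $\tau\nu/\epsilon = O(1)$ (rather than $o(1)$), which is exactly why the CFL constraint in statement 1 carries the prefactor $\epsilon$, and conversely why the sharper $\sqrt{\epsilon h}$ bound in statement 2 is the natural scaling when the stability region covers a nontrivial portion of the imaginary axis.
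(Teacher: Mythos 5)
Your proposal is correct and follows essentially the same route as the paper, which derives Theorem 2 from Theorem 1 by the substitution $c\mapsto c/\epsilon$ (equivalently $\nu\mapsto\nu/\epsilon$) and a rescaling of the eigenvalues to $\pm i\sqrt{\nu/\epsilon}$; in fact you spell out the bookkeeping more explicitly than the paper does, which leaves it at "replace $c$ by $c/\epsilon$ and rescale $L$." The only minor remark is that the $1/\epsilon$ in the bound on $L$ is what the naive rescaling gives and is certainly a valid upper bound, though your own estimate $\tau\nu/\epsilon\leq c\sigma C\pi$ under $\tau\leq\epsilon Ch$ suggests it need not be tight.
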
 
Formula \eqref{optimalstrategy} provides convinience for simulation of the behavior of high frequency wave, or system \eqref{highfreq}. A nortorious difficulty in numerical research of high frequency wave is the spatial aliasing error and its consequntial requirement of stability: if we want to capture the a high frequency wave function correctly, we need to resolve the spatial mesh, which also requires a smaller time step size. If the spatial step size $h\sim\epsilon$, we might ask the time step $\tau$ to be of higher order of $\epsilon$, such as $O(\epsilon^2)$. But in the context of system \eqref{highfreq}, if $h\sim\epsilon$ holds, using certain RK method, such as the classic RK4 we use in the later section, we can get that stability condition \eqref{optimalstrategy} merely asks $\tau\sim\epsilon$ holds. Thus to conduct a convincing numerical research of system \eqref{highfreq}, we don't have to require $\tau$ to be higher order of $\epsilon$, only $O(\epsilon)$ is already enought which won't lead to heavly load of computation. This explains the meaning of formula \eqref{optimalstrategy}: it verifies an optimal meshing strategy.

\section{Variable-coefficient system}\label{sec:variable}

In this section, we move onto System \eqref{eq:nonlocalsystem} with variable coefficients. We assume that all the coefficients are smooth on $\mathbb{T}$, $\sigma\ge \sigma_0>0$ and $c(\theta, t)\ge c_0>0$. Consider the energy functional
\begin{gather}\label{eq:energyfunctional}
\begin{split}
E&=\frac{1}{2}\left(\langle \Lambda v, v\rangle+\langle v, v\rangle+\left\langle \frac{c}{\sigma}u, u \right\rangle\right) \\
&=\frac{1}{2}\int_{\mathbb{T}} \left(v(\theta, t)\Lambda v(\theta, t)+v^2(\theta, t)+\frac{c(\theta, t)}{\sigma(\theta, t)} u^2(\theta, t)\right) d\theta.
\end{split}
\end{gather}
Taking the derivative, we find
\[
\begin{split}
\dot{E}&=\langle \Lambda v, v_t\rangle+\langle v, v_t\rangle
+\frac{1}{2}\left\langle \frac{d}{dt}\left(\frac{c}{\sigma}\right)u, u \right\rangle
+\left\langle \frac{c}{\sigma}u, u_t \right\rangle\\
&=\langle \Lambda v, g_2\rangle+\langle v, -cu+g_2\rangle
+\frac{1}{2}\left\langle \frac{d}{dt}\left(\frac{c}{\sigma}\right)u, u \right\rangle
+\left\langle \frac{c}{\sigma}u, g_1 \right\rangle
\end{split}
\]

By Parseval equality, we have $\langle \Lambda v, g_2\rangle
=\langle v, \Lambda g_2\rangle\le \|\Lambda g_2\|_2\|v\|_2$. Other terms can be easily controlled:
\[
\dot{E}\le C_1E+C_2\sqrt{E}.
\]
This implies that
\[
\frac{d}{dt}\sqrt{E}\le \frac{1}{2}(C_1\sqrt{E}+C_2).
\]
Gr\"onwall inequality implies that $\sqrt{E}$ is stable, which is consistent with the hyperbolicity.

\subsection{Convergence of the semi-discretization }
In this subsection, we discretize the spatial variables first and then show that the semi-discretization is convergent.
We approximate $u(\tau,t)$ and $v(\tau,t)$ respectively by $N$-vectors $U_h$ and $V_h$. Let $\sigma, a,b,c$ be restricted to the grid points.

Let us introduce a filter function $\xi\mapsto\rho(\xi)\in\mathbb{R}$ ($\xi\in(-\pi,\pi]$) Given a filter function $\rho: (-\pi, \pi]\to \mathbb{R}$, we denote $\check{\rho}_h$ the operator with symbol $\rho_h(k)=\rho(hk)$, so that
\begin{gather*}
g=\check{\rho}_hf ~~~\text{means}~~~\hat{g}_k=\rho(hk)\hat{f}_k, ~~~k\in[N]^*
\end{gather*}
Then, we have the filtered version:
\begin{gather}
\mathcal{D}_{\rho}=\check{\rho}_h\mathcal{D}f, ~~~\mathcal{L}_{\rho}=\check{\rho}_h\mathcal{L}f.
\end{gather}

We will assume the following conditions for the filter function:
\begin{condition}
\begin{itemize}
\item $\rho \ge 0$, even and $\rho \in C^2(-\pi, \pi]$ (Note that $\rho$ may not be $C^2$ on torus).
\item There exists $r\in \mathbb{N}_+$ such that
\begin{gather}\label{eq:filteracc}
\sup_{\xi\in (0, \pi)}|\xi|^{-r}|\rho(\xi)-1|<\infty.
\end{gather}
\end{itemize}
\end{condition}

Clearly, if we do not need a filter, we can simply set $\rho=1$. The centered difference on torus $(D_cu)_j=\frac{1}{2h}(u_{j+1}-u_{j-1})$ can be regarded as a filtered Fourier differentiation with filter $\rho(\xi)=\frac{\sin(\xi)}{\xi}$. Since $\rho$ is nonnegative, we can then define the natural discrete Sobolev norms associated with $\rho$ to be
\begin{gather}
\Vert f\Vert^2_{H^1_h}:=\Vert f\Vert^2_2+\Vert\mathcal{D}_{\rho} f\Vert^2_2, ~~~
\Vert f\Vert^2_{H^{1/2}_h}:=\sum_{k\in[N]^*}(1+|k|\rho(kh))|\hat{f}_k|^2.
\end{gather}

The following properties are straightforward from Lemma \ref{lmm:pars}:
\begin{lemma}\label{lmm:intbypartD}
Suppose $f,g$ are two $N$-vectors.We have the integration by parts following Parserval's equality:
\begin{gather*}
\langle f,\mathcal{D}_{\rho}g\rangle=-\langle \mathcal{D}_{\rho}f,g\rangle, ~~ \langle f,\mathcal{H}g\rangle=-\langle \mathcal{H}f,g\rangle,
~~\langle f,\mathcal{L}_{\rho}g\rangle=\langle \mathcal{L}_{\rho}f,g\rangle.
\end{gather*}
\end{lemma}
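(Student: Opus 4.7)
The plan is to verify all three identities by moving to the Fourier side via Lemma \ref{lmm:pars} and reading off the symbol of each operator. The common template is: if $\mathcal{T}$ is a spectral multiplier with Fourier symbol $m(k)$, then
\[
\langle f,\mathcal{T}g\rangle = 2\pi\sum_{k\in[N]^*}\hat{f}_k\,\overline{m(k)\hat{g}_k} = 2\pi\sum_{k\in[N]^*}\overline{m(k)}\,\hat{f}_k\,\overline{\hat{g}_k},
\quad
\langle \mathcal{T}f,g\rangle = 2\pi\sum_{k\in[N]^*}m(k)\,\hat{f}_k\,\overline{\hat{g}_k}.
\]
Thus the identity $\langle f,\mathcal{T}g\rangle = \pm\langle\mathcal{T}f,g\rangle$ reduces to checking whether $\overline{m(k)}=\mp m(k)$, i.e.\ whether the symbol is purely imaginary or purely real.

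First, I would record the symbols from Section \ref{sec:setup} and the filter definition: $\mathcal{D}_{\rho}$ has symbol $ik\rho(kh)$, $\mathcal{H}$ has symbol $-i\,\sgn(k)$, and $\mathcal{L}_{\rho}=\mathcal{D}_{\rho}\mathcal{H}$ has symbol $|k|\rho(kh)$ (using that $ik\cdot(-i\,\sgn k)=|k|$, with the convention $\sgn(0)=0$). Because $\rho$ is real-valued (and even, by Condition 1, though only realness is needed here), the first two symbols are purely imaginary while the third is purely real. Applying the template above then immediately gives
\[
\overline{ik\rho(kh)}=-ik\rho(kh),\qquad
\overline{-i\,\sgn(k)}=i\,\sgn(k),\qquad
\overline{|k|\rho(kh)}=|k|\rho(kh),
\]
so $\mathcal{D}_{\rho}$ and $\mathcal{H}$ are anti-self-adjoint on $\mathscr{E}_N$ with respect to $\langle\cdot,\cdot\rangle$, and $\mathcal{L}_{\rho}$ is self-adjoint, which is exactly the three stated identities.

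I would write the proof as three short lines, each invoking Lemma \ref{lmm:pars} once and substituting the symbol. No hypothesis on $r$ or smoothness of $\rho$ enters — only its real-valuedness — so the lemma holds in the full generality stated. There is no real obstacle: the only point that requires a moment of care is the Nyquist index $k=N/2$ in $[N]^*$, where one should verify that the symbol computation is consistent (for $\mathcal{D}_{\rho}$ one may assume $\rho(\pi)=0$, or otherwise note that the boundary mode is handled by the same algebraic identity since the symbol is still purely imaginary). Apart from this bookkeeping about the highest mode, the proof is a direct application of the discrete Parseval identity.
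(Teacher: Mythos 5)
Your proof is correct and follows essentially the same route as the paper: the authors also invoke the discrete Parseval equality (Lemma \ref{lmm:pars}), pass to the Fourier side, and use that the symbol $ik\rho(kh)$ conjugates to its negative because $\rho$ is real-valued, checking one identity explicitly and omitting the others. Your extra remark about the Nyquist mode $k=N/2$ is harmless but unnecessary, since the symbol identities hold mode-by-mode for every $k\in[N]^*$ regardless of whether $\rho(\pi)=0$.
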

To see this, we check for example
\[
\langle f,\mathcal{D}_{\rho}g\rangle=
2\pi\sum_{k\in [N]^*} \hat{f}~ \overline{\rho(kh)ik\hat{g}_k}
=-2\pi \sum_{k\in [N]^*} \left(ik\rho(kh) \hat{f}_k\right)~ \overline{\hat{g}_k}
=-\langle \mathcal{D}_{\rho}f, g\rangle.
\]
Other equalities can be similarly checked and we omit the details.

With the filter, we discretize the system in space with the filtered pseudo Fourier spectral method, while keeping time continuous:
\begin{gather}\label{eq:UVh}
\left\{
\begin{split}
& \frac{dU_h}{dt} = \sigma\mathcal{L}_{\rho}V_h +g_1, \\
& \frac{dV_h}{dt} = -cU_h +g_2.
\end{split}
\right.
\end{gather}

We start with checking the consistency of the discretization. The following is straightforward by Fourier analysis and the aliasing formula, whose proof is omitted:
\begin{lemma}\label{lmm:filtererr}
Let $\varphi \in C^{\infty}(\mathbb{T})$ and $N\in\mathbb{N}$. Then,
the restriction $f=(f_j)=(\varphi(\theta_j))$ of $\varphi$ to the grid
points satisfies
\begin{gather}
\label{eq:aliasDf}
\begin{split}
&(\mathcal D_{\rho} f)_j-\varphi'(\theta_j)= R_1(\theta_j, h, r)h^r,\qquad
(\mathcal{L}_{\rho} f)_j - (\Lambda\varphi)(\theta_j) = R_2(\theta_j, h,r)h^r,\quad j\in[N],
\end{split}
\end{gather}
where $R_i: \mathbb{T}\to \mathbb{R}$ ($i=1,2$) are functions with $|\partial_{\theta}^\alpha R_i(\theta, h, r)|$ bounded uniformly in $\theta$ and $h$, for any $\alpha\in\mathbb{N}$.
\end{lemma}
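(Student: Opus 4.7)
The plan is to use the aliasing formula combined with the filter accuracy condition \eqref{eq:filteracc} and the rapid (Schwartz-type) decay of the Fourier coefficients of $\varphi\in C^{\infty}(\mathbb{T})$. Writing $\varphi(\theta)=\sum_{n\in\mathbb{Z}}\hat{\varphi}_n e^{in\theta}$ and noting that at the grid points $e^{i(k+mN)\theta_j}=e^{ik\theta_j}$ for every integer $m$, the discrete Fourier coefficients of $f$ satisfy the aliasing identity $\hat{f}_k=\sum_{m\in\mathbb{Z}}\hat{\varphi}_{k+mN}$ for $k\in[N]^*$. Expanding $(\mathcal{D}_{\rho}f)_j$ through this identity and comparing with $\varphi'(\theta_j)=\sum_n in\hat{\varphi}_n e^{in\theta_j}$ (after regrouping the latter over $k\in[N]^*$ and $m\in\mathbb{Z}$) produces the clean splitting
\begin{equation*}
(\mathcal{D}_{\rho}f)_j-\varphi'(\theta_j)=\sum_{k\in[N]^*}ik\bigl(\rho(kh)-1\bigr)\hat{\varphi}_k e^{ik\theta_j}+\sum_{k\in[N]^*}\sum_{m\neq 0}\bigl(ik\rho(kh)-i(k+mN)\bigr)\hat{\varphi}_{k+mN}e^{ik\theta_j},
\end{equation*}
which isolates a filter-error term and an aliasing-error term.

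For the filter term, condition \eqref{eq:filteracc} yields $|\rho(kh)-1|\le C|kh|^r$, so the summand is bounded by $C|k|^{r+1}|\hat{\varphi}_k|h^r$; Schwartz decay of $\hat{\varphi}$ makes $\sum_k|k|^{r+1}|\hat{\varphi}_k|$ converge and delivers the advertised $h^r$ factor. For the aliasing term, every index $k+mN$ with $m\neq 0$ satisfies $|k+mN|\ge N/2$, so invoking $|\hat{\varphi}_{k+mN}|\le C_M|k+mN|^{-M}$ with $M$ chosen large enough (e.g.\ $M\ge r+2$) extracts a factor $h^r=(2\pi/N)^r$ while leaving an absolutely convergent double sum. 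I then define $R_1(\theta,h,r)$ by dividing both sums by $h^r$ and allowing $\theta$ to vary continuously on $\mathbb{T}$; the value at $\theta=\theta_j$ is precisely the original error divided by $h^r$, and reality of $R_1$ follows from the usual conjugate symmetry $\hat{\varphi}_{-n}=\overline{\hat{\varphi}_n}$ together with evenness of $\rho$.

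To verify the regularity statement, I would differentiate $R_1$ termwise in $\theta$: each $\partial_{\theta}^{\alpha}$ contributes at most $|k|^{\alpha}$ in the filter part and $|k+mN|^{\alpha}$ in the aliasing part, both absorbed by enlarging the exponent $M$ in the Schwartz bound and by using summability of $|k|^{r+1+\alpha}|\hat{\varphi}_k|$. This produces bounds on $|\partial_{\theta}^{\alpha}R_1|$ that depend on $\alpha$, $r$, and seminorms of $\varphi$, but not on $h$ or $\theta$. The argument for $\mathcal{L}_{\rho}$ is structurally identical: $ik$ is replaced by $|k|$ on the discrete side and $in$ by $|n|$ on the continuous side, the filter error becomes $|k|(\rho(kh)-1)$, and the aliasing correction $|k|\rho(kh)-|k+mN|$ is handled in the same way (when $|k|\le N/2$ and $m\neq 0$, the signs $\mathrm{sgn}(k)$ and $\mathrm{sgn}(k+mN)$ are either equal or only differ on a bounded set, which is harmless). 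The main obstacle is purely bookkeeping: one must commit to a concrete choice of $M$ (growing with $\alpha$ and $r$) to beat the polynomial growth from differentiation and from the aliasing weight; once this is done, no conceptual difficulty remains.
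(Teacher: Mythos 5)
Your proposal is correct and follows exactly the route the paper indicates (the paper omits the proof, stating only that it is ``straightforward by Fourier analysis and the aliasing formula''): you use the aliasing identity $\hat{f}_k=\sum_m\hat{\varphi}_{k+mN}$ to split the error into a filter part controlled by \eqref{eq:filteracc} and an aliasing part controlled by the rapid decay of $\hat{\varphi}$, then define $R_i$ by extending the grid sums to continuous $\theta$ and absorbing derivatives by enlarging the decay exponent. The bookkeeping (choice of $M\ge r+2+\alpha$, the bound $|k|\le|k+mN|$ for $m\neq 0$) is carried out correctly, so no gap remains.
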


As a corollary of Lemma \ref{lmm:filtererr}, we have the following consistency result, and the proofs are omitted:
\begin{lemma}\label{lmm:consistency}
Assume that the exact solution $(u,v)\in C^{\infty}(\mathbb{T}\times [0,T])$ and the filter satisfies \eqref{eq:filteracc}. Setting
$U_e=(u(\tau_j,t))$, $V_e=(v(\tau_j,t))$,
then we have
\begin{gather} \label{eq:UVconistency}
\left\{
\begin{split}
\frac{dU_e}{dt} = \sigma\mathcal{L}_{\rho} V_e+g_1+R_3(\theta_j, t;h) h^r,\\
\frac{dV_e}{dt} = -cU_e+g_2+R_4(\theta_j,t; h) h^r.
\end{split}
\right.
\end{gather}
where $R_i(\cdot,\cdot;h)$ ($i=3,4$) are two smooth functions on $\mathbb{T}\times [0, T]$ with $W^{\alpha, \ell^{\infty}}$ norms uniformly bounded in $h$ for any $\alpha\in \mathbb{N}$.
\end{lemma}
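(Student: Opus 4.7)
The plan is to reduce this lemma to a direct consequence of Lemma \ref{lmm:filtererr} applied time-slice by time-slice. First I would use the fact that the exact solution $(u,v)$ satisfies the continuous system \eqref{eq:nonlocal2} pointwise. Evaluating both equations at the grid point $\theta_j$ at time $t$ gives
\begin{equation*}
\frac{d}{dt}u(\theta_j,t) = \sigma(\theta_j,t)(\Lambda v)(\theta_j,t) + g_1(\theta_j,t),\qquad
\frac{d}{dt}v(\theta_j,t) = -c(\theta_j,t)u(\theta_j,t) + g_2(\theta_j,t).
\end{equation*}
Since the left-hand sides are exactly $(dU_e/dt)_j$ and $(dV_e/dt)_j$, the only place a truncation error can enter is through the nonlocal term $\Lambda v$ in the first equation; the second equation is fully local and is satisfied identically on the grid, so I will take $R_4\equiv 0$.

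Next I would freeze $t\in[0,T]$ and apply Lemma \ref{lmm:filtererr} to the function $\varphi(\theta)=v(\theta,t)\in C^\infty(\mathbb{T})$. This yields
\begin{equation*}
(\mathcal{L}_\rho V_e)_j - (\Lambda v)(\theta_j,t) = R_2(\theta_j,t,h,r)\, h^r,
\end{equation*}
where $R_2$ inherits smoothness in $\theta_j$ uniformly in $h$, and where the $t$-dependence can be tracked because the proof of Lemma \ref{lmm:filtererr} proceeds via the aliasing formula applied to $\hat{v}(\cdot,t)$, whose decay is uniform in $t\in[0,T]$ by the assumed $C^\infty$-regularity of $v$ on the compact set $\mathbb{T}\times[0,T]$. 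Substituting this into the first exact equation and moving the error to the right gives
\begin{equation*}
\frac{dU_e}{dt} = \sigma\,\mathcal{L}_\rho V_e + g_1 + R_3(\theta_j,t;h)\,h^r,\qquad
R_3(\theta,t;h) := -\sigma(\theta,t)\,R_2(\theta,t,h,r),
\end{equation*}
which is exactly \eqref{eq:UVconistency}.

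The only remaining task is to verify the uniform $W^{\alpha,\ell^\infty}$ bounds on $R_3$ (and trivially on $R_4=0$) claimed in the statement. This follows because $\sigma\in C^\infty(\mathbb{T}\times[0,T])$ by assumption and $R_2(\cdot,\cdot,h,r)$ has all $\theta$-derivatives bounded uniformly in $h$ by Lemma \ref{lmm:filtererr}; the $t$-derivatives introduce only further Fourier multipliers of $\partial_t^\ell v$, whose Fourier modes again decay uniformly in $t\in[0,T]$, so the aliasing argument of Lemma \ref{lmm:filtererr} closes in $t$ as well. I do not anticipate any real obstacle here since every step is either a direct appeal to a lemma already in hand or standard Leibniz/chain-rule bookkeeping; the only point that requires a slight bit of care is making sure the bounds in Lemma \ref{lmm:filtererr} are joint in $(\theta,t)$ rather than just in $\theta$, which is handled by the compactness of $[0,T]$ and the smoothness hypothesis on $(u,v)$.
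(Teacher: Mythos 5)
Your proposal is correct and follows exactly the route the paper intends: the paper presents this lemma as a corollary of Lemma \ref{lmm:filtererr} with the proof omitted, and your argument—evaluating the exact equations on the grid, noting that only the nonlocal term $\Lambda v$ in the first equation incurs a truncation error (so $R_4\equiv 0$ is admissible), and applying Lemma \ref{lmm:filtererr} time-slice by time-slice with uniformity in $t$ supplied by compactness of $[0,T]$ and the $C^\infty$ regularity of $(u,v)$—is the natural filling-in of that omitted proof. Your attention to the joint $(\theta,t)$ uniformity of the remainder bounds is the only nontrivial point, and you handle it correctly.
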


Now we show the convergence of the semi-discretized equations \eqref{eq:UVh}.
\begin{proposition}\label{pro:semidis_convergence}
Consider \eqref{eq:nonlocalsystem} with $\sigma\ge \sigma_0>0$ and $c\ge c_0>0$ and all the coefficients are smooth. For any $r\in \mathbb{N}$, the exact solution $(u,v)\in C^{\infty}(\mathbb{T}\times [0,T])$. Let $(U_e,V_e)$ be the restriction of the exact solution to grid and $(U_h,V_h)$ be the numerical solution given by the pseudo-spectral method \eqref{eq:UVh} with the same initial values. Then there exists a constant $M(T)>0$, such that $\forall t\in [0, T]$:
\begin{gather}
\begin{split}
\Vert U_h(t)-U_e(t)\Vert_2 \leq M(T)h^{r}, \\
\Vert V_h(t)-V_e(t)\Vert_{H^{1/2}_h} \leq M(T)h^{r}.
\end{split}
\end{gather}
\end{proposition}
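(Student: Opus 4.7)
The plan is to run an energy estimate directly on the error, using a discrete version of the functional \eqref{eq:energyfunctional} that is designed to produce cancellation of the nonlocal term. Set $E_U = U_h - U_e$, $E_V = V_h - V_e$. Subtracting \eqref{eq:UVconistency} from \eqref{eq:UVh} gives the error system
\begin{gather*}
\partial_t E_U = \sigma \mathcal{L}_{\rho} E_V - R_3 h^r, \qquad
\partial_t E_V = -c\, E_U - R_4 h^r,
\end{gather*}
with $E_U(0) = E_V(0) = 0$ by the assumption of identical initial data. Define the discrete error energy
\begin{gather*}
\mathcal{E}(t) = \frac{1}{2}\left(\langle \mathcal{L}_{\rho} E_V, E_V\rangle + \langle E_V, E_V\rangle + \left\langle \tfrac{c}{\sigma} E_U, E_U \right\rangle\right),
\end{gather*}
which mimics \eqref{eq:energyfunctional}. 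Note that $\mathcal{E}\ge 0$ since $\mathcal{L}_{\rho}$ has nonnegative Fourier symbol and $c/\sigma \ge c_0/\sigma_{\max} > 0$.

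Next I would differentiate $\mathcal{E}$ in time and substitute the error equations. The crucial point is the interaction between the $\mathcal{L}_{\rho}E_V$ term in $\partial_t\mathcal{E}$ and the $\sigma \mathcal{L}_{\rho}E_V$ term arising from $\partial_t E_U$: using Lemma~\ref{lmm:intbypartD} (self-adjointness of $\mathcal{L}_{\rho}$) together with the fact that the error quantities are real-valued, the two contributions
\begin{gather*}
\langle \mathcal{L}_{\rho}E_V,\, -c E_U\rangle \qquad \text{and} \qquad \langle \tfrac{c}{\sigma}E_U,\, \sigma \mathcal{L}_{\rho}E_V\rangle = \langle c E_U,\, \mathcal{L}_{\rho}E_V\rangle
\end{gather*}
cancel exactly, which is the whole reason for the weight $c/\sigma$. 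What remains are (i) lower order terms of the form $\langle E_V, -cE_U\rangle$ and $\frac{1}{2}\langle \partial_t(c/\sigma)E_U, E_U\rangle$ that are controlled directly by $\mathcal{E}$, and (ii) source terms involving $R_3, R_4$ multiplied by $h^r$.

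For the source term containing $\langle \mathcal{L}_{\rho}E_V, R_4\rangle h^r$, I would move $\mathcal{L}_{\rho}$ onto the smooth residual via Lemma~\ref{lmm:intbypartD}, giving $\langle E_V, \mathcal{L}_{\rho}R_4\rangle h^r$; since $R_4(\cdot,t)$ is smooth uniformly in $h$ by Lemma~\ref{lmm:consistency}, the norm $\|\mathcal{L}_{\rho}R_4\|_2$ is bounded uniformly in $h$. Collecting everything and using Cauchy--Schwarz gives a differential inequality of the form
\begin{gather*}
\dot{\mathcal{E}}(t) \le C_1(T)\,\mathcal{E}(t) + C_2(T)\, h^r \sqrt{\mathcal{E}(t)},
\end{gather*}
which, rewritten as $\frac{d}{dt}\sqrt{\mathcal{E}} \le \tfrac{1}{2}C_1\sqrt{\mathcal{E}} + \tfrac{1}{2}C_2 h^r$ and combined with $\mathcal{E}(0)=0$, yields $\sqrt{\mathcal{E}(t)} \le M(T) h^r$ by Gr\"onwall.

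Finally, the two stated norms are recovered from $\mathcal{E}$: using Parseval (Lemma~\ref{lmm:pars}),
\begin{gather*}
\|E_V\|_{H^{1/2}_h}^2 = \tfrac{1}{2\pi}\big(\|E_V\|_2^2 + \langle \mathcal{L}_{\rho}E_V, E_V\rangle\big) \le \tfrac{1}{\pi}\mathcal{E},\qquad
\|E_U\|_2^2 \le \tfrac{2\sigma_{\max}}{c_0}\mathcal{E},
\end{gather*}
which immediately gives the asserted estimates. The main obstacle I anticipate is the cancellation step: one must verify that the self-adjointness of $\mathcal{L}_{\rho}$ really does remove the $\mathcal{L}_{\rho}$ from both remaining terms simultaneously despite the variable coefficients $c$ and $\sigma$, and this is exactly where the choice of weight $c/\sigma$ in $\mathcal{E}$ is essential. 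Everything else is bookkeeping of smooth residuals and Gr\"onwall.
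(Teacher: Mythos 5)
Your proposal is correct and follows essentially the same route as the paper: the same weighted discrete energy $\tfrac{1}{2}\bigl(\langle \mathcal{L}_{\rho}e_v,e_v\rangle+\|e_v\|_2^2+\langle \tfrac{c}{\sigma}e_u,e_u\rangle\bigr)$, the same exact cancellation of the nonlocal cross terms via the weight $c/\sigma$ and self-adjointness of $\mathcal{L}_{\rho}$, the same transfer of $\mathcal{L}_{\rho}$ onto the smooth residual $R_4$, and the same Gr\"onwall closure. The only differences are cosmetic (sign convention on the residuals and your more careful tracking of the $2\pi$ factor in Parseval when recovering the $H^{1/2}_h$ norm).
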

\begin{proof}
Define the error vectors
\begin{gather}
e_u=U_h-U_e, ~~~ e_v=V_h-V_e.
\end{gather}
Taking the difference of equations \eqref{eq:UVh} and \eqref{eq:UVconistency}, we find the error functions satisfy the following equations
\begin{gather}\label{eq:erruv}
\left\{
\begin{split}
\frac{de_u}{dt}=\sigma\mathcal{L}_{\rho} e_v+R_3h^r, \\
\frac{de_v}{dt}=-ce_u+R_4h^r.
\end{split}
\right.
\end{gather}
Consider the energy functional for this ODE system
\begin{gather}
E=\frac{1}{2}\left(\langle\mathcal{L}_{\rho}e_v,e_v\rangle+\Vert e_v\Vert^2_2+\langle\frac{c}{\sigma}e_u,e_u\rangle \right).
\end{gather}
Note that $\langle\mathcal{L}_{\rho}e_v,e_v\rangle+\langle e_v,e_v\rangle=\Vert e_v\Vert^2_{H^{1/2}_h}$ and $\langle\frac{c}{\sigma}e_u,e_u\rangle$ is equivalent to $\|e_u\|_2^2$ (i.e. there exist $C_1>0, C_2>0$ such that $C_1\|e_u\|_2^2\le \langle\frac{c}{\sigma}e_u,e_u\rangle \le C_2\|e_u\|_2^2$).

By a similar computation with the continuous case, we can estimate the energy. In particular, we have
\begin{gather*}
\frac{dE}{dt}= \langle\mathcal{L}_{\rho} e_v,\frac{de_v}{dt}\rangle + \langle\frac{c}{\sigma}e_u,\frac{de_u}{dt}\rangle+\frac{1}{2}\langle\frac{d}{dt}(\frac{c}{\sigma})e_u,e_u\rangle+\langle e_v, \frac{d}{dt}e_v\rangle
\end{gather*}
According to Equation \eqref{eq:erruv},
\begin{equation}\label{eq:dEdt2}
\begin{split}
\langle\mathcal{L}_{\rho}e_v, \frac{de_v}{dt} \rangle + \langle\frac{c}{\sigma}e_u,\frac{de_u}{dt}\rangle
& = \langle R_4h^r, \mathcal{L}_{\rho}e_v\rangle +
\langle \frac{c}{\sigma}e_u, R_3h^r \rangle \\
& =\langle (\mathcal{L} R_4)h^r, e_v\rangle+\langle \frac{c}{\sigma}e_u, R_3h^r \rangle\\
&\le M_1\sqrt{E}h^r.
\end{split}
\end{equation}
In the first estimate, we have used the fact that $\mathcal{L}R_4$ is uniformly bounded by the smoothness of the error.
The last term $\frac{d\langle e_v,e_v\rangle}{dt}$ is straightforward:
\begin{equation*}
\begin{split}
\frac{d\langle e_v,e_v\rangle}{dt} & =2 \langle e_v,-ce_u + Rh^r\rangle \\
&\leq M_2(\Vert e_u\Vert^2_2
+ \Vert e_v\Vert^2_2 + \Vert e_v\Vert_2h^r)
\end{split}
\end{equation*}
We have
\begin{gather*}
\frac{dE}{dt}\leq M(E+\sqrt{E}h^{r}) \Rightarrow \frac{d}{dt}\sqrt{E}\le \frac{M}{2}(\sqrt{E}+h^r).
\end{gather*}
By Gr\"onwall inequality, we finally obtain
\begin{gather*}
\sqrt{E}\leq M(T)h^{r}, \qquad \forall ~~ 0\leq t\leq T.
\end{gather*}
which leads to our estimate for the error directly.
\end{proof}

\subsection{Time diescretization}\label{subsec:timevariable}

For the convenience of further discussion, we introduce the notion of smoothing operators, analogy to the big-$O$ notation, introduced in \cite{bhl96}:
\begin{definition}\label{def:AN}
Let $\mathcal{A}=\{A_N\}$ be a family of operators indexed by $N$. We define its action on $N$-vector $f$ as
$\mathcal{A}(f):=A_N(f)$.
We say $\mathcal{A}$ is $m$-th order smoothing, if exists $C>0$ independent of $N$ such that for any vector $f$ we have
\begin{gather*}
\Vert \mathcal{A}(\mathcal{D}^pf)\Vert_2\leq C\Vert f\Vert_2, ~~~
\Vert \mathcal{D}^p_{\rho}(\mathcal{A}(f))\Vert_2\leq C\Vert f\Vert_2, ~~~\forall 0\leq p\leq m.
\end{gather*}
If $\mathcal{A}$ is $m$-th order smoothing, we denote it as $\mathcal{A}_{-m}$.
\end{definition}
We note that $h\mathcal{D}_{\rho}=\mathcal{A}_0$ since $|kh|\le \pi$.
Recall a lemma from \cite{bhl96}
\begin{lemma}\label{lmm:commuhilbert}
Let $[\varphi, \mathcal{H}]\cdot=\varphi \mathcal{H}\cdot-\mathcal{H}(\varphi \cdot)$ be the commutator between $\varphi$ and discrete $\mathcal{H}$. Besides, the conditions for $\rho$, if we further have $\rho(\pi)=0$, then for $\varphi\in C^{\infty}$,
\begin{gather}
[\varphi,\mathcal{H}](\check{\rho}_h\omega)=\mathcal{A}_{-1}(\omega),~\forall \omega\in \mathscr{E}_N.
\end{gather}
If we instead have $\rho(\pi)=0$ and $\rho'(\pi)=0$, we have
\begin{gather}
[\varphi,\mathcal{H}](\check{\rho}_h\omega)=\mathcal{A}_{-2}(\omega),~\forall \omega\in \mathscr{E}_N.
\end{gather}
\end{lemma}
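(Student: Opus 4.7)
The plan is to work on the Fourier side and exploit the fact that the symbol of the discrete Hilbert transform, $-i\,\mathrm{sgn}(k)$, has two sources of discontinuity: a jump at $k=0$, which will be absorbed by the smoothness of $\varphi$, and an effective jump at the boundary $k=\pm N/2$ coming from aliasing in the product $\varphi\cdot\omega$ on the grid, which will be killed by the vanishing of $\rho$ at $\pi$. I expand $\varphi$ in its (rapidly decaying) continuous Fourier series $\varphi(\theta)=\sum_{l\in\mathbb{Z}}\hat{\varphi}_l e^{il\theta}$ and $\omega_j=\sum_{k\in[N]^*}\hat{\omega}_k e^{ik\theta_j}$.

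Computing the discrete Fourier coefficient of $[\varphi,\mathcal{H}](\check{\rho}_h\omega)$ at mode $m\in[N]^*$, the pointwise product of $\varphi$ with a grid function produces a sum over $l\in\mathbb{Z},\,k\in[N]^*$ with the aliasing constraint $l+k\equiv m\pmod N$, and the commutator collapses to the factor $-i\bigl[\mathrm{sgn}(m)-\mathrm{sgn}(k)\bigr]\rho(kh)$. I would then split this into a principal part, where $l+k=m$ exactly in $[N]^*$, and aliasing tails, where $l+k=m+Np$ for some nonzero integer $p$.

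For the principal part, $\mathrm{sgn}(k+l)-\mathrm{sgn}(k)$ is nonzero only when $k$ and $k+l$ have opposite signs, which restricts $k$ to an interval of length $O(|l|)$; combined with the super-polynomial decay of $\hat{\varphi}_l$ and Cauchy--Schwarz in $l$, this yields a kernel that gains at least one degree of regularity in $m$, which is the source of the $\mathcal{A}_{-1}$ smoothing (and more generally of $\mathcal{A}_{-m}$ for arbitrary $m$, by inserting extra factors of $|k|$ or $|m|$ and reusing the same decay). For the aliasing part, the shift forces $|k|\geq N/2-|l|$, so $kh$ sits near $\pm\pi$; Taylor expanding $\rho$ at $\pi$ gives $|\rho(kh)|\leq Ch\,(N/2-|k|)$ under $\rho(\pi)=0$, and $|\rho(kh)|\leq Ch^2(N/2-|k|)^2$ under $\rho(\pi)=\rho'(\pi)=0$. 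These small factors furnish the extra one or two powers of $h$ that are equivalent, by $h\mathcal{D}_\rho=\mathcal{A}_0$, to one or two orders of smoothing, matching Definition \ref{def:AN}. To verify the definition in full, I check both $\|[\varphi,\mathcal{H}](\check{\rho}_h\mathcal{D}^p\omega)\|_2$ and $\|\mathcal{D}^p_\rho[\varphi,\mathcal{H}](\check{\rho}_h\omega)\|_2$ for $0\leq p\leq m$, since inserting $\mathcal{D}^p$ only introduces polynomial factors $|k|^p$ or $|m|^p$ inside the Fourier sum, which are controlled by the previous estimates.

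The hard part will be the aliasing bookkeeping: the interplay between the continuous modes $l$ of $\varphi$, the discrete modes $k,m$ of the grid functions, and the shifts $p$ is where errors in estimating the smoothing order are most likely. Once the principal/aliasing split is clean and the Taylor bounds on $\rho$ near $\pi$ are in place, the remaining estimates reduce to Parseval plus Cauchy--Schwarz in $l$, and the argument follows the commutator analysis already carried out for the water wave interface problem in \cite{bhl96}.
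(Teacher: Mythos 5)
Your sketch is essentially correct, and it reconstructs the argument behind the lemma: the paper itself supplies no proof here (the lemma is quoted directly from \cite{bhl96}), and your split into a principal part killed by the rapid decay of $\hat{\varphi}_l$ (since $\sgn(k+l)\neq\sgn(k)$ forces $|l|\gtrsim |m|$) plus an aliasing part confined to $|k|h$ near $\pi$, where the first- or second-order vanishing of $\rho$ supplies the factor $h$ or $h^2$ that converts to one or two orders of smoothing via $h\mathcal{D}_{\rho}=\mathcal{A}_0$, is exactly the mechanism of the cited argument. You also correctly note that both families of bounds in Definition \ref{def:AN} (derivatives on the input and on the output) must be checked, so nothing essential is missing.
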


We denote define the operator
$A(t): \mathscr{E}_N^2\to \mathscr{E}_N^2$ as
\begin{gather}\label{eq:At}
A(t)(u, v)=\langle \sigma \mathcal{L}_{\rho}v, -c u\rangle
\end{gather}
so that \eqref{eq:UVh} can be rewritten as
\[
\frac{d}{dt}(U_h, V_h)=A(t)(U_h, V_h)+(g_1, g_2).
\]

We also define the operators $P(t): \mathscr{E}_N^2\to \mathscr{E}_N^2$ and $P^{-1}(t): \mathscr{E}\times\mathcal{Q}\to \mathscr{E}_N^2$ as
\begin{gather}
\begin{split}
& P(t)(u, v):=\left\langle u, \Lambda_{\rho}^{1/2}\Big(\sqrt{\frac{\sigma}{c}}v \Big) \right\rangle,\\
& P^{-1}(t)(u, v):=\left\langle u, \sqrt{\frac{c}{\sigma}}\Lambda_{\rho}^{-1/2}v \right\rangle.
\end{split}
\end{gather}
Here, the set $\mathcal{Q}$ is the following subspace of $\mathscr{E}_N$:
\[
\mathcal{Q}=\Big\{v\in \mathscr{E}_N: \|v\|_{\mathcal{Q}}:=\sum_{k\in [N]^*}\frac{1}{\rho(kh)|k|}|\hat{v}_k|^2<\infty \Big\}.
\]

We have the following claim
\begin{theorem}\label{thm:structureoftheoperator}
Suppose that the filter satisfies the conditions in ... and that $\rho(\pi)=0$. Then, we can decompose
the operator $A(t)$ (Equation \eqref{eq:At}) as
\begin{gather}
A(t)=\frac{1}{\sqrt{h}} P(t)^{-1}A_1(t) P(t)+P(t)^{-1}A_2(t)P(t),
\end{gather}
( recall $h=2\pi/N$), where the linear operators $A_1(t): \mathscr{E}_N^2\to \mathscr{E}_N^2$ and $A_2(t): \mathscr{E}_N\times\mathcal{Q}\to \mathscr{E}_N^2 $ satisfy
\begin{enumerate}[(i)]
\item The ranges of $A_i(t)$ are contained in $\mathscr{E}_N\times \mathcal{Q}$. $A_1(t)$ is anti-symmetric and there exist constants $N_0>0, C>0$ independent of $h$ such that
\[
\|A_1(t)(u,v)\|_2\le C(\|u\|_2+\|v\|_2), ~~\|A_2(t)(u, v)\|_2\le C\|v\|_{\mathcal{Q}}~\forall N\ge N_0.
\]
\item The eigenvalues of $P(t)^{-1}A_1(t) P(t)$ are purely imaginary, and bounded by a constant $C$ independent of $N$.
The eigenvalues of $P(t)^{-1}A_2(t)P(t)$ are bounded by a constant $C$ independent of $N$.
\end{enumerate}
\end{theorem}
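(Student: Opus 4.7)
The plan is to realize the decomposition by directly computing the similarity transform $PAP^{-1}$ and isolating its antisymmetric principal part, which accounts for all of the $1/\sqrt{h}$ scaling, from a genuinely bounded remainder. Writing $\alpha(\theta,t):=\sqrt{c/\sigma}$ and $\beta(\theta,t):=\sqrt{\sigma c}$, one unravels
\begin{gather*}
PAP^{-1}(u,w)=\bigl(\sigma\,\Lambda_\rho(\alpha\Lambda_\rho^{-1/2}w),\;-\Lambda_\rho^{1/2}(\beta u)\bigr),
\end{gather*}
and then pushes $\alpha$ past $\Lambda_\rho$ using $\Lambda_\rho\alpha=\alpha\Lambda_\rho+[\Lambda_\rho,\alpha]$ together with $\sigma\alpha=\beta$ to rewrite the first component as $\beta\Lambda_\rho^{1/2}w+\sigma[\Lambda_\rho,\alpha]\Lambda_\rho^{-1/2}w$. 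This suggests the choices
\begin{gather*}
A_1(t)(u,w):=\sqrt{h}\,\bigl(\beta\,\Lambda_\rho^{1/2}w,\;-\Lambda_\rho^{1/2}(\beta u)\bigr),\qquad
A_2(t)(u,w):=\bigl(\sigma\,[\Lambda_\rho,\alpha]\Lambda_\rho^{-1/2}w,\;0\bigr),
\end{gather*}
so that by construction $PAP^{-1}=\tfrac{1}{\sqrt{h}}A_1+A_2$, which is the required decomposition.

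For the range claims, observe that the Fourier symbol of $\sqrt{h}\,\Lambda_\rho^{1/2}$ vanishes on the same modes as $\rho(kh)|k|$, so the second component of $A_1$ lies in $\mathcal{Q}$ with norm $\lesssim \sqrt{h}\|\beta u\|_2$; the second component of $A_2$ is trivially in $\mathcal{Q}$. Antisymmetry of $A_1$ reduces, via Lemma \ref{lmm:intbypartD} and the self-adjointness of $\Lambda_\rho^{1/2}$ (whose symbol is real and nonnegative), to the identity $(\beta\Lambda_\rho^{1/2})^{\ast}=\Lambda_\rho^{1/2}\beta$ with respect to the discrete inner product, which is exactly the negative-adjoint relation that the two off-diagonal blocks must satisfy. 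For the norm bound, the symbol $\sqrt{h\rho(kh)|k|}$ is bounded by $\sqrt{\pi\|\rho\|_\infty}$ since $|k|h\le\pi$, so $\sqrt{h}\,\Lambda_\rho^{1/2}$ is a uniformly bounded Fourier multiplier and $\|A_1(u,w)\|_2\le C\|\beta\|_\infty(\|u\|_2+\|w\|_2)$.

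The main technical obstacle lies in the estimate $\|A_2(u,w)\|_2\le C\|w\|_{\mathcal{Q}}$. Since $\|\Lambda_\rho^{-1/2}w\|_2=\|w\|_{\mathcal{Q}}$ by the very definition of $\mathcal{Q}$, the task reduces to showing that $[\Lambda_\rho,\alpha]$ is uniformly $L^2$-bounded. Exploiting the factorization $\Lambda_\rho=\mathcal{D}\mathcal{H}\check\rho_h$ (the three factors are all Fourier multipliers, hence commute pairwise) and the expansion $[\mathcal{D}\mathcal{H},\alpha]=[\mathcal{D},\alpha]\mathcal{H}+\mathcal{D}[\mathcal{H},\alpha]$, the first term is bounded directly because $[\mathcal{D},\alpha]$ is essentially multiplication by $\partial_\theta\alpha$ for smooth $\alpha$, while the second, combined with $\check\rho_h$ on the right, equals $\mathcal{D}\bigl([\mathcal{H},\alpha](\check\rho_h\cdot)\bigr)$, whose inner factor is an $\mathcal{A}_{-1}$ operator by Lemma \ref{lmm:commuhilbert} thanks to $\rho(\pi)=0$; by Definition \ref{def:AN} the composition with $\mathcal{D}$ is then uniformly bounded.

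Statement (ii) follows from similarity: $P^{-1}A_iP$ is conjugate to $A_i$ on the invariant subspace $\mathscr{E}_N\times\mathcal{Q}$, so their spectra coincide there. The antisymmetry of $A_1$ on $\mathscr{E}_N^2$ forces its eigenvalues to lie on the imaginary axis, and $|\lambda|\le\|A_1\|_{\mathrm{op}}\le C$ is uniform in $h$. For $A_2$, the spectral radius is controlled by the operator norm established in the preceding paragraph. I expect the commutator estimate for $A_2$ to be the most delicate step, since it requires chaining the discrete smoothing calculus from \cite{bhl96} with the filter hypothesis $\rho(\pi)=0$; everything else is routine algebra built on the lemmas already at our disposal.
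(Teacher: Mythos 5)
Your proposal is essentially the paper's own proof: you arrive at exactly the same $A_1(t)(u,w)=\sqrt{h}\,(\beta\Lambda_\rho^{1/2}w,\,-\Lambda_\rho^{1/2}(\beta u))$ and $A_2(t)(u,w)=(\sigma[\Lambda_\rho,\alpha]\Lambda_\rho^{-1/2}w,\,0)$ via the same conjugation $PAP^{-1}$, bound $A_2$ by the same splitting of the commutator into a derivative--multiplication part plus a filtered Hilbert-transform commutator handled by Lemma \ref{lmm:commuhilbert}, and prove (ii) by the same similarity/antisymmetry argument. The only deviation is bookkeeping: your factorization treats $[\Lambda_\rho,\alpha]$ as $[\mathcal{D}\mathcal{H},\alpha]\check{\rho}_h$ and thereby silently drops the term $\mathcal{L}[\check{\rho}_h,\alpha]$, whereas the paper's exact Leibniz split $[\mathcal{D}_\rho\mathcal{H},\alpha]=\mathcal{D}_\rho[\mathcal{H},\alpha]+[\mathcal{D}_\rho,\alpha]\mathcal{H}$ avoids this; the dropped term is controllable by the same smoothing calculus (since $[\check{\rho}_h,\alpha]=\mathcal{A}_{-1}$ for a $C^2$ filter and $h\mathcal{L}=\mathcal{A}_0$), so this is a minor fixable slip rather than a different route.
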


\begin{proof}
We consider the operator $B(t)$ whose domain is $\mathscr{Q}$, defined by
\[
B(t):=P(t)A(t)P(t)^{-1}.
\]
Then, it is given by:
\[
B(t)(u, v)=\left\langle \sigma \Lambda_{\rho} \Big(\sqrt{\frac{c}{\sigma}}\Lambda_{\rho}^{-1/2}v\Big), -\Lambda_{\rho}^{1/2}(\sqrt{\sigma c} u) \right\rangle
\]

We then define $A_1(t)$ as
\[
\langle u, v\rangle \mapsto A_1(t)(u, v) :=\sqrt{h}\left\langle \sqrt{\sigma c} \Lambda_{\rho}^{1/2}v, -\Lambda_{\rho}^{1/2}(\sqrt{\sigma c} u) \right\rangle
\]
and $A_2(t):=B(t)-\frac{1}{\sqrt{h}}A_1(t)$ is given by
\[
A_2(t)(u, v)=\left\langle \sigma \left[\Lambda_{\rho}, \sqrt{\frac{c}{\sigma}}\right](\Lambda_{\rho}^{-1/2}v), 0 \right\rangle.
\]
We can directly verify that the ranges of $A_i$ are in $\mathscr{E}_N\times \mathcal{Q}$. That $A_1$ is bounded, antisymmetic is clear. We now focus on $A_2$. Note that
\[
\left[\Lambda_{\rho}, \sqrt{\frac{c}{\sigma}}\right](\Lambda_{\rho}^{-1/2}v)
=\mathcal{D}_{\rho}\left[\mathcal{H}, \sqrt{\frac{c}{\sigma}}\right](\Lambda_{\rho}^{-1/2}v)+\left[\mathcal{D}_{\rho}, \sqrt{\frac{c}{\sigma}}\right]\mathcal{H}\Lambda_{\rho}^{-1/2}v.
\]
Denote $w=\Lambda_{\rho}^{-1/2}v$ and it is clear that
\[
\|w\|_2\le C\|v\|_{\mathcal{Q}}.
\]
By Lemma \ref{lmm:commuhilbert}, the first term is
\[
\mathcal{D}_{\rho}\mathcal{A}_{-1}(w)
=\mathcal{A}_0(w).
\]
The second term, by the discrete product rule in \cite{bhl96} is also $\mathcal{A}_0(w)$. This then verifies $(i)$.

For (ii), we see that the action of $P(t)^{-1}A_i(t)P(t)$ ($i=1,2$) are well-defined for all $(u, v)\in\mathscr{E}_N^2$. Hence, they can be understood as a matrices. For $P(t)^{-1}A_1P(t)$, it is relatively easy to see the claim since $A_1$ is antisymmetric, bounded. We now focus on $P(t)^{-1}A_2P(t)$. Suppose that $(u,v)$ is a complex eigenvector in $\mathscr{E}_N^2$, so that
\[
P(t)^{-1}A_2P(t)(u, v)=\lambda (u,v).
\]
Denote $(u_1, v_1)=P(t)(u, v)$
\begin{gather*}
|\lambda|(\|u\|_2+\|v\|_2)=\|P(t)^{-1}A_2P(t)(u, v)\|_2=\|A_2P(t)(u, v)\|_2
\le C\|v_1\|_{\mathcal{Q}}\le C\|v\|_2.
\end{gather*}
This then shows (ii).
\end{proof}

By Theorem \ref{thm:structureoftheoperator}, we can find that the leading order structure is an anti-symmetric operator, whose eigenvalue is pure imaginary, and scales as $1/\sqrt{h}$. If we use ODE solvers whose stability region contains some part of the imaginary axis, like the explicit RK-$p$ method with $p\ge 3$, then we expect the stability condition is still
\[
\frac{\tau}{\sqrt{h}}\le C,
\]
for variable coefficient case.

\subsection{Comments on linear systems transport terms}\label{subsec:transport}

Consider the following linear systems:
\begin{gather}\label{eq:linearsystem2}
\begin{split}
& u_t=\sigma(\theta,t)\Lambda v+ b(\theta, t)\partial_{\theta}u+g_1,\\
& v_t=-c(\theta, t)u+b(\theta, t)\partial_{\theta}v+g_2
\end{split}
\end{gather}
where $\sigma, c, b$ are given coefficient functions. The transport terms affect the discretization in two aspects:

\begin{enumerate}[(i)]
\item
First of all, one may desire to use a filtered version of $\mathcal{L}_{\rho}$ and $\mathcal{D}_{\rho}$ with $\rho(\pi)=0$, $\rho'(\pi)=0$ to dampen high frequency so that the discretized energy is still stable.

To see this, let us consider the continuous version of the equations, and consider the same energy functional \eqref{eq:energyfunctional}.
The terms in $\dot{E}$ can be estimated similarly as before except for
$\langle \Lambda v, b\partial_{\theta}v\rangle$. To estimate this term, we find
\begin{gather*}
\int_{\mathbb{T}} (\Lambda v) b\partial_{\theta}v d\theta=-\frac{1}{2}\int_{\mathbb{T}} \partial_{\theta}v [H, b](\partial_{\theta} v)d\theta
=\frac{1}{2}\int_{\mathbb{T}} v \partial_{\theta}([H, b]\partial_{\theta}v)d\theta.
\end{gather*}
where $[H, b] =Hb-bH$ is the commutator. Note that $b$ is smooth. The commutator $[H, b]\partial_{\theta}v$ gives a convolution type integral between a smooth function with $\partial_{\theta}v$. It follows that
\[
\frac{1}{2}\int_{\mathbb{T}} v \partial_{\theta}([H, b]\partial_{\theta}v)d\theta
\le C\int_{\mathbb{T}} v^2 d\theta
\]
Hence, $\dot{E}\le C_1E+C_2\sqrt{E}$ still holds.

Unfortunately, for discretized Hilbert transform,
$[\mathcal{H}, b]$ is not in general smoothing. In fact, in \cite{bhl96}, the authors found that $[\mathcal{H}, b]$ may not even be $\mathcal{A}_{-1}$. By Lemma \ref{lmm:commuhilbert}, one needs a filter $\rho$ so that the commutator $[\mathcal{H}, b]\check{\rho}=\mathcal{A}_{-2}$ has the smoothing effect so that \[
\dot{E}\le C_1 E+C_2\sqrt{E}
\]
still holds.

\item On the other side, the transport terms require the CFL condition to be of the form
\[
\frac{\tau}{h}\le C,
\]
which is more restrictive compared with \eqref{eq:newcfl}, which could be resolved using semi-Lagrangian method.
\end{enumerate}

\section{Relation to water wave simulation}\label{sec:towaterwave}

In Section \ref{sec:intro}, we mentioned that the nonlocal hyperbolic systems are closely related to waterwave equations and their simulation. In this section, we explore this in more detail and see how our study of the nonlocal system implies the stability conditions for the simulation of waterwave problems.

Consider a two dimensional fluid (water) with infinite depth. Suppose the waves are periodic so that the surface of the fluid can be described by $z: \mathbb{R}\times [0,\infty) \rightarrow \mathbb{C}$:
\begin{gather}
z(\alpha, t)=x(\alpha, t)+i y(\alpha, t),
\end{gather}
where $\alpha\in \mathbb{R}$ parametrizes the undisturbed surface so that $\alpha$ is a material coordinate. Also, $s(\alpha, t):=z(\alpha, t)-\alpha$ is a periodic function in $\alpha$. Without loss of generality, we can assume that the period is $2\pi$ (otherwise, one can just do rescaling).

The fluid is inviscid and irrotational so that there exists a velocity potential $\Phi(x,y,t)$ such that the velocity is given by $\nabla\Phi$. Let
\[
\phi: \mathbb{R}\times [0,\infty) \rightarrow \mathbb{R}, ~~(\alpha, t)\mapsto \phi(\alpha, t):=\Phi(x(\alpha, t), y(\alpha, t), t)
\]
be the evaluation of the velocity potential at the surface so that $\phi(\alpha, t)$ is a periodic function in $\alpha$ with period $2\pi$. By the derivation in
\cite{bmo1982, beale93, bhl96}, $z$ and $\phi$ satisfy the following system of equations (Equations (1)-(3) in \cite{bhl96}):
\begin{gather}\label{eq:waterwave}
\left\{
\begin{split}
&\bar{z}_t=\frac{1}{4\pi i}\int_{-\pi}^{\pi} \gamma(\alpha') \cot(\frac{z(\alpha)-z(\alpha')}{2})d\alpha' +\frac{\gamma(\alpha)}{2 z_{\alpha}(\alpha)}=: w(\alpha, t),\\
& \phi_t=\frac{1}{2}|w|^2-gy,\\
& \phi_{\alpha}=\frac{\gamma}{2}+Re\left[ \frac{z_{\alpha}}{4\pi i}\int_{-\pi}^{\pi}\gamma(\alpha')
\cot(\frac{z(\alpha)-z(\alpha')}{2}) d\alpha' \right],
\end{split}
\right.
\end{gather}
where $\bar{z}$ means the complex conjugate, and $\gamma(\alpha, t)$ is some unknown quantity to be determined by the third equation.

In \cite{beale93}, the authors showed that the linearization of \eqref{eq:waterwave} leads to \eqref{eq:bealeerr}. Indeed, this happens for the numerical schemes as well. In \cite{bhl96}, the authors then proposed a filtered pseudo-spectral differentiation method to discretize the spatial variables. Besides the conditions in
\tcr{to fill in}, assume the filter also satisfies (i) $\rho(\pi)=0$ and $\rho'(\pi)=0$; (ii) there exists $r\ge 4$ such that $|\rho(\xi)-1|\le C|\xi|^r$ for $\xi\in (-\pi, \pi]$.

Let $j\in [N]$ and $(z_j, \phi_j, \gamma_j)$ be the numerical solutions at the grid points. Then, the discretization to \eqref{eq:waterwave} is given by (Eq. (7)-(9) in \cite{bhl96})
\begin{gather}\label{eq:waterwavesemi}
\left\{
\begin{split}
& \frac{d}{dt}\bar{z}_j=\frac{1}{4\pi i}\sum_{p=-N/2+1, ~p-j\mathrm{~odd}}^{N/2}\gamma_p \cot(\frac{\check\rho z_j-\check\rho z_p}{2}) 2h+\frac{\gamma_j}{2 (1+\mathcal{D}_{\rho}(z_j-\alpha_j))}=: w_j ,\\
& \frac{d}{dt}\phi_j=\frac{1}{2}|w_j|^2-g y_j,\\
& \mathcal{D}_{\rho}\phi_j=\frac{\gamma_j}{2}+Re\left[\frac{1+\mathcal{D}_{\rho} (z_j-\alpha_j)}{4\pi i}
\sum_{p=-N/2+1, ~p-j\mathrm{~odd}}^{N/2}\gamma_p \cot(\frac{\check\rho z_j-\check\rho z_p}{2}) 2h
\right]
\end{split}
\right.
\end{gather}
In \cite{bhl96}, the authors then introduced the following numerical errors
\begin{gather}
\begin{split}
&\eta_j(t)= Im\left[ (z_j(t)-z(\alpha_j, t))\frac{\overline{z}_{\alpha}(\alpha_j, t)}{|z_{\alpha}(\alpha_j, t)|}\right],\\
&\delta_j(t)= Re\left[ (z_j(t)-z(\alpha_j, t))\frac{\overline{z}_{\alpha}(\alpha_j, t)}{|z_{\alpha}(\alpha_j, t)|}\right]+(\mathcal{H}\eta)_j,\\
& \zeta_j=(\phi_j(t)-\phi(\alpha_j, t))-Re( w_j (z_j(t)-z(\alpha_j, t))
\end{split}
\end{gather}
By assuming the the strong Taylor sign condition (Eq. (88) in \cite{bhl96})
\[
c(\alpha, t):=-\partial_np\ge c_0>0
\]
and that the true solutions are smooth with
\[
\sigma(\alpha, t):=\frac{1}{|z_{\alpha}|}\ge \sigma_0>0,
\]
the authors found that these variables for errors satisfy the following semi-linear non-local hyperbolic system (Eq. (89)-(91)):
\begin{gather}\label{eq:bealfull}
\begin{split}
&\partial_t \eta_j=\sigma(\alpha_j, t) (\Lambda \zeta)_j+\mathcal{A}_0(\eta, \delta)+\mathcal{A}_0(\zeta)+R_5(h)h^r,\\
&\partial_t \zeta_j=-c(\alpha_j, t) \eta_j+\frac{1}{2}|w_j(t)-w(\alpha_j, t)|^2,\\
&\partial_t \delta_j=\mathcal{A}_0(\eta, \delta, \zeta)+R_6(h)h^r.
\end{split}
\end{gather}
See Definition \ref{def:AN} for $\mathcal{A}_0$.
The leading order behavior of the semi-linear system \eqref{eq:bealfull} is \eqref{eq:bealeerr}, the nonlocal hyperbolic system.
Making use of this hyperbolic structure, the authors showed that the semi-discrete system \eqref{eq:waterwavesemi} converges to the original waterwave problem \eqref{eq:waterwave}.
However, there was no discussion about time discretization.

Since the errors satisfy to the leading order the nonlocal hyperbolic system \eqref{eq:bealeerr} or \eqref{eq:nonlocalsystem}, by the discussion in \eqref{subsec:timevariable}, we expect that the stability conditions for time discretization would be similar to those for \eqref{eq:nonlocalsystem}. If use the scheme \eqref{eq:waterwavesemi} and RK4 for time discretization, we could have a relaxed constraint
\[
\frac{\tau}{\sqrt{h}}\le C,
\]
for stability. We will examine this numerically in Section \ref{sec:num}.

\begin{remark}
According to \eqref{eq:wu97}, if we discretize the water wave problem based on the conformal mapping method in \cite{wu97} instead of discretizing using the Lagrangian formulation as in \cite{bhl96}, intrinsically, we will have a transport term for the numerical error. By the discussion in Section \ref{subsec:transport}, we will need a more restrictive requirement
\[
\tau/h\le C,
\]
for the stability.
\end{remark}

\section{Numerical examples}\label{sec:num}

In this section, we present some numerical examples to verify our conclusion and carry out some meaningful numerical experiments. In Sections \ref{subsec:stabnonlocal} and \ref{subsec:waterwave}, we verify that when we apply RK4 temporal discretization the stability condition agrees with \eqref{eq:newcfl} for both the nonlocal hyperbolic system and water wave problem. Convergence of the discretization of nonlocal hyperbolic system is demonstrated in Section \ref{subsec:conv} and the exploration of high frequency regime of the nonlocal system is performed in Section \ref{subsec:highfreq}. Besides, a turn-over wave example in \cite{bhl96} is recovered to verify the correctness of our code in Section \ref{subsec:waterwave}. Periodic boundary conditions are chosen in these simulations and we always take spectral method or filtered spectral method for spatial discretization.

\subsection{Stability condition for nonlocal hyperbolic system}\label{subsec:stabnonlocal}

\begin{figure}
\begin{center}
\includegraphics[width=0.8\textwidth]{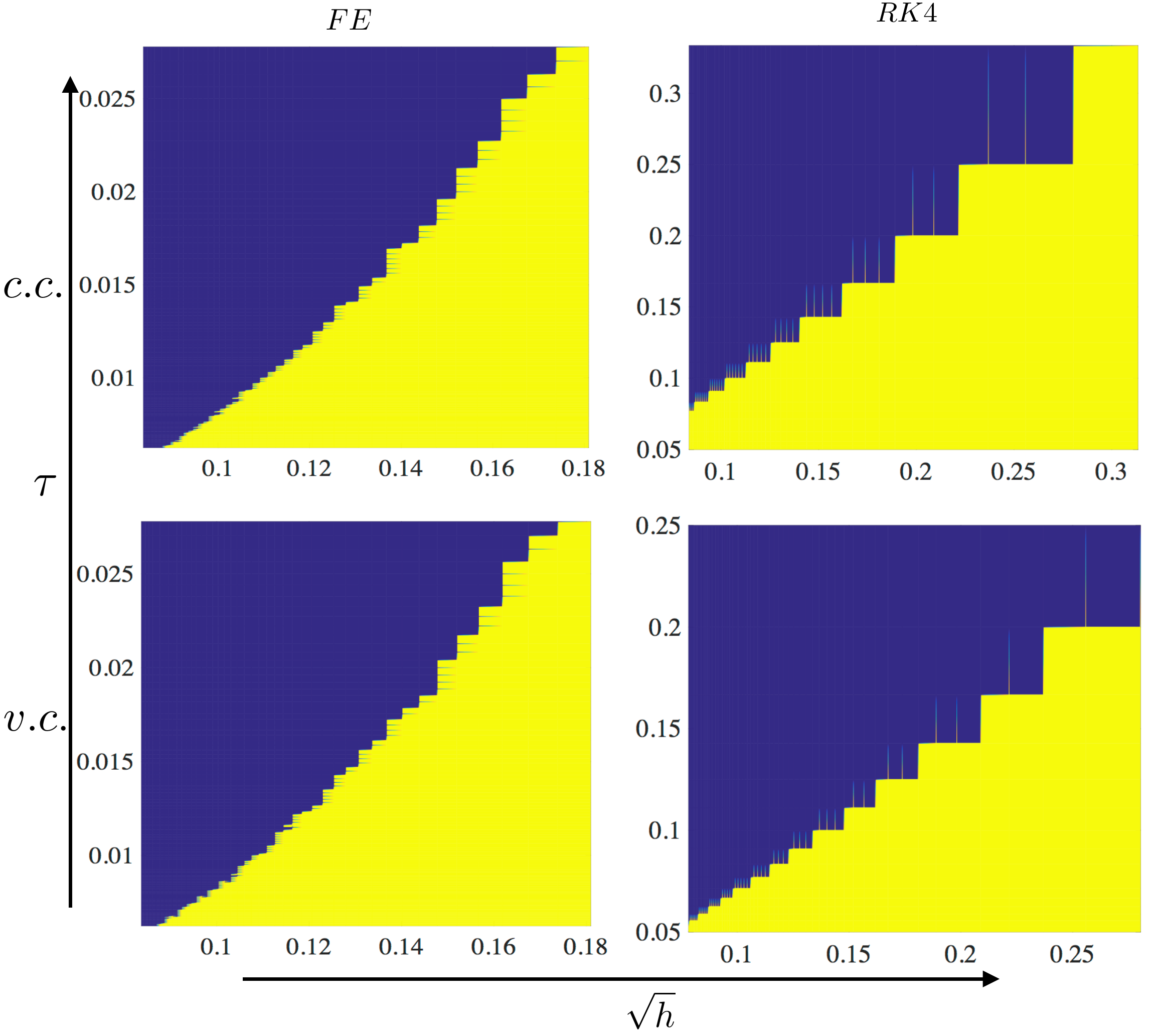}\\
\includegraphics[width=0.78\textwidth]{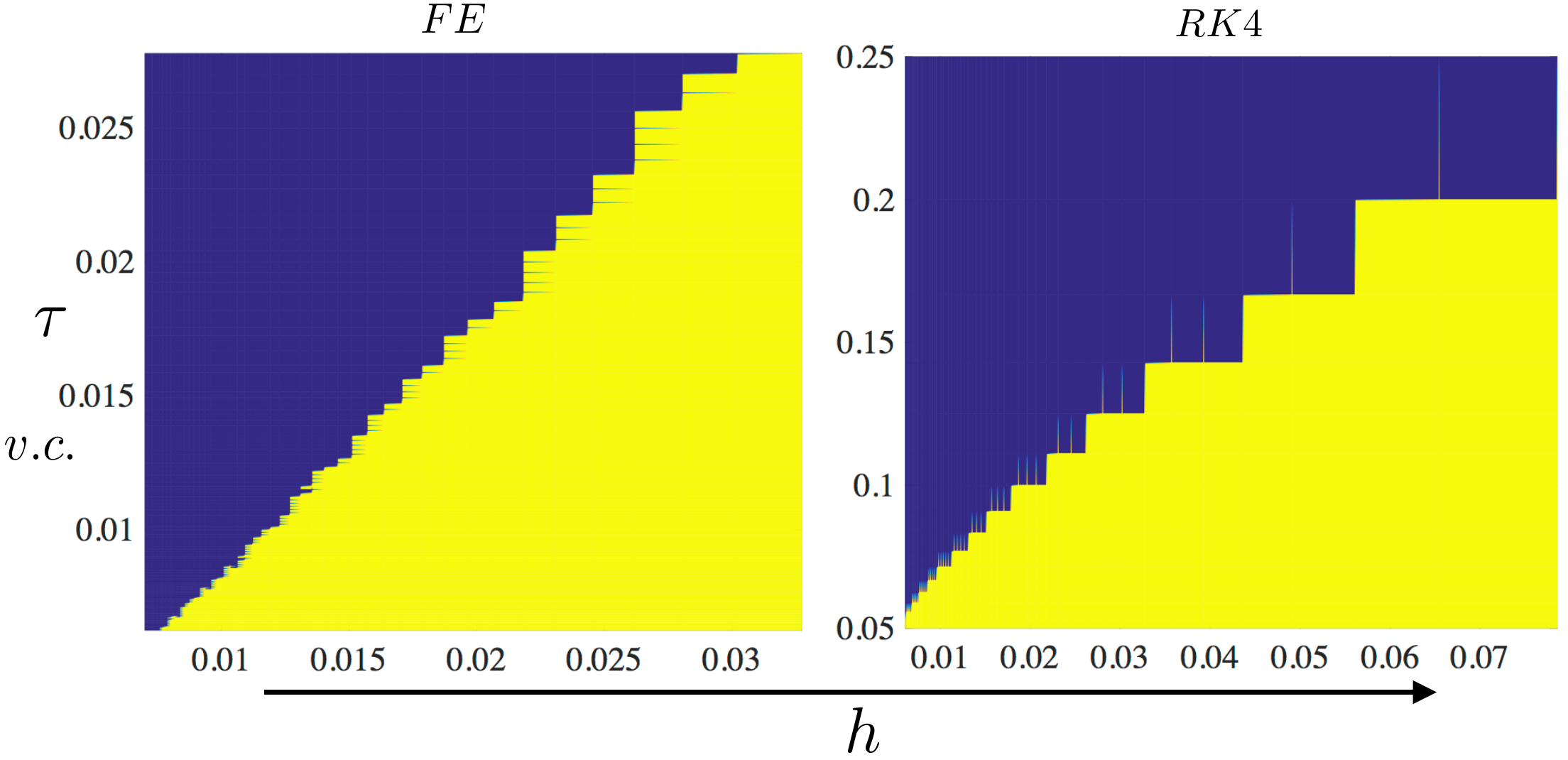}
\end{center}
\caption{Stability condition for nonlocal hyperbolic system. The first row shows the results for the constant coefficient (c.c.) case, while the second row shows the results for the variable coefficient case (v.c.). The two columns are for forward Euler and RK4 respectively. Bottom: Re-plot of second row into $h$-$\tau$ plane.}
\label{fig:gridfornonlocalhy}
\end{figure}

In this example, we test stability condition for the nonlocal hyperbolic system \eqref{eq:nonlocalsystem} with $g_1 = 0, g_2 = 0$.
We consider both the constant-coefficient case with
\[
c=3,~~\sigma=1
\]
and the variable-coefficient case with
\[
c(\theta, t)=\exp(\cos(\theta+t)),~~\sigma=2+\sin(\theta+t).
\]
We then perform the simulations for various stepsizes using Fourier spectral method in space and forward Euler (FE) and Runga-Kutta 4 (RK4) for temporal discretizaiton. The solutions are computed up to $T=10$. Results are presented in Figure \ref{fig:gridfornonlocalhy}, the blue part indicates the unstable region while the yellow part represents the stable region.

In the top half of Figure \ref{fig:gridfornonlocalhy}, we plot in the $\sqrt{h}$-$\tau$ plane. We find that the border for RK4 is like lines while the border curves for FE is some convex curve. This means that the stability condition for RK4 is really \eqref{eq:newcfl}. To check the condition for FE, we re-plot the variable-coefficient case in $h$-$\tau$ plane as shown in the bottom of Figure \ref{fig:gridfornonlocalhy}. The new plots show that the stability condition for FE is $\tau\le Ch$.

To understand these results, we recall that the stability region for FE only intersects the imaginary axis at $z=0$, while RK4 contains some part of imaginary axis. This verifies the analysis in Sections \ref{sec:constantcoe} and \ref{sec:variable}.

\subsection{Convergence study}\label{subsec:conv}

In this subsection, we verify the convergence numerically for the nonlocal hyperbolic system \eqref{eq:nonlocalsystem} with $g_1 = 0, g_2 = 0$, the
constant coefficients are given by
\[
c=3,~\sigma=1.
\]
The initial conditions are given by
\[
u_0(\theta)=e^{\sin(\theta)}+\cos(\theta), ~~v_0(\theta)=\cos^2(\theta).
\]
Again Fourier spectral method is used for spatial discretization and forward Euler (FE), backward Euler (BE), Crank-Nicolson (CN) and Runga-Kutta 4 (RK4) are used for temporal discretizaiton.

All results are computed to time $T=2$. The reference solution (or `accurate solution') is computed using Runga-Kutta 4 with $h=2\pi/2^7$ and $\tau=10^{-5}$. The error plots are shown in Figure \ref{fig:convergencefornonlocalhy}. By Figure \ref{fig:convergencefornonlocalhy} (a), it is clear that spectral accuracy is observed in spatial discretization. When $h\approx 0.2$, the errors have already been dominated by the temporal error. By Figure \ref{fig:convergencefornonlocalhy} (b) indicates that the temporal errors are of the order as expected. Our discretization schemes indeed converge.

\begin{figure}[H]
\begin{center}
\includegraphics[width=0.8\textwidth]{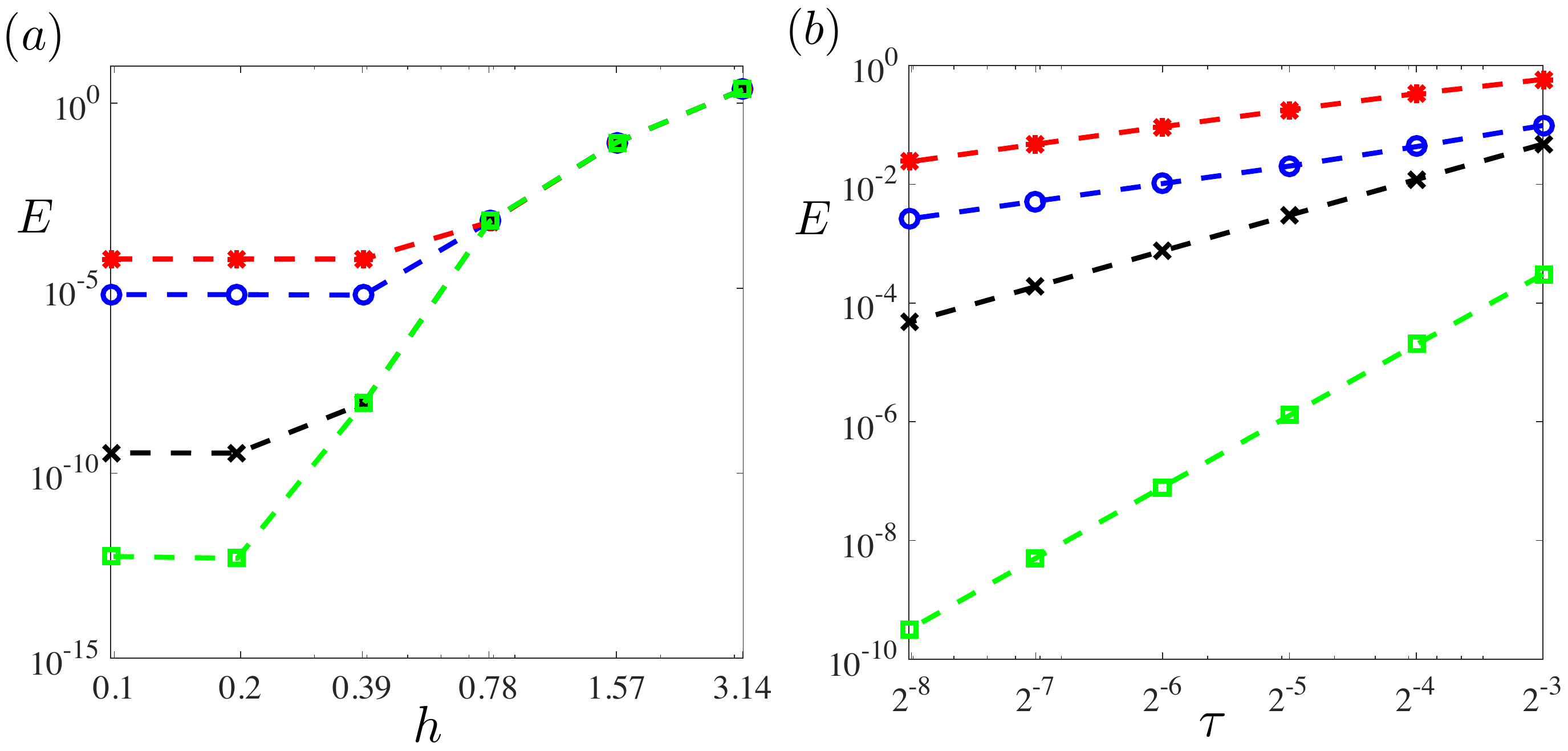}
\end{center}
\caption{Convergence study for forward Euler (blue circles), backward Euler (red stars), Crank-Nicolson (black crosses) and Runga-Kutta 4 (green squares). (a). Spectral convergence in spatial with $\tau=10^{-5}$ (b). Time convergence, with $h=2\pi/2^7$.}
\label{fig:convergencefornonlocalhy}
\end{figure}

\subsection{The system in high frequency regime}\label{subsec:highfreq}

To investigate the system in high frequency and check whether there is caustic phenomenon, we typically use a WKB kind initial value and require it to have a corresopond frequency. Therefore, we consider \eqref{rescaling} with a selected initial value:
\begin{equation}
\left\{
\begin{split}
& \epsilon\partial_{tt}u=-\mu H\partial_xu,(x,t)\in \mathbb{T}_1\times\mathbb{R}^{+}\\
& u(x,0)=e^{-100{(x-0.5)}^2}e^{i\mathrm{log}(20\mathrm{cosh}(5x-2.5))/\epsilon},\\
& u_t(x,0)=e^{-100{(x-0.5)}^2}e^{i\mathrm{log}(20\mathrm{cosh}(5x-2.5))/\epsilon}.
\end{split}
\right.
\end{equation}
The initial value is made up by a guassian function and a high frequency term, the former is used to control the support and the latter is a WKB type function.
In the numerical experiment, we choose $\mu=1$ and run for different $\epsilon$, namely $\epsilon=2^{-i},i=4,5,...,12$. We plot the snapshot of amplitude at $t=0.0625$ for $\epsilon=2^{-4},2^{-6},2^{-8},2^{-10}$ in Figure \ref{fig:hfreq}. We can find that the amplitude gets larger when $\epsilon$ gets smaller. Meanwhile, notice that no matter how small $\epsilon$ is, the amplitude of $u(x,0)$ is not larger than 1, thus the growing trend of amplitude provides evidence for caustic phenomenon. To conduct a more careful observation, we check the maximum amplitude before time $t=0.0625$ in the whole domain [0,1], and we plot the following log-log figure in Figure \ref{fig:loglog}.
\begin{figure}[H]
\begin{center}
\includegraphics[width=0.8\textwidth]{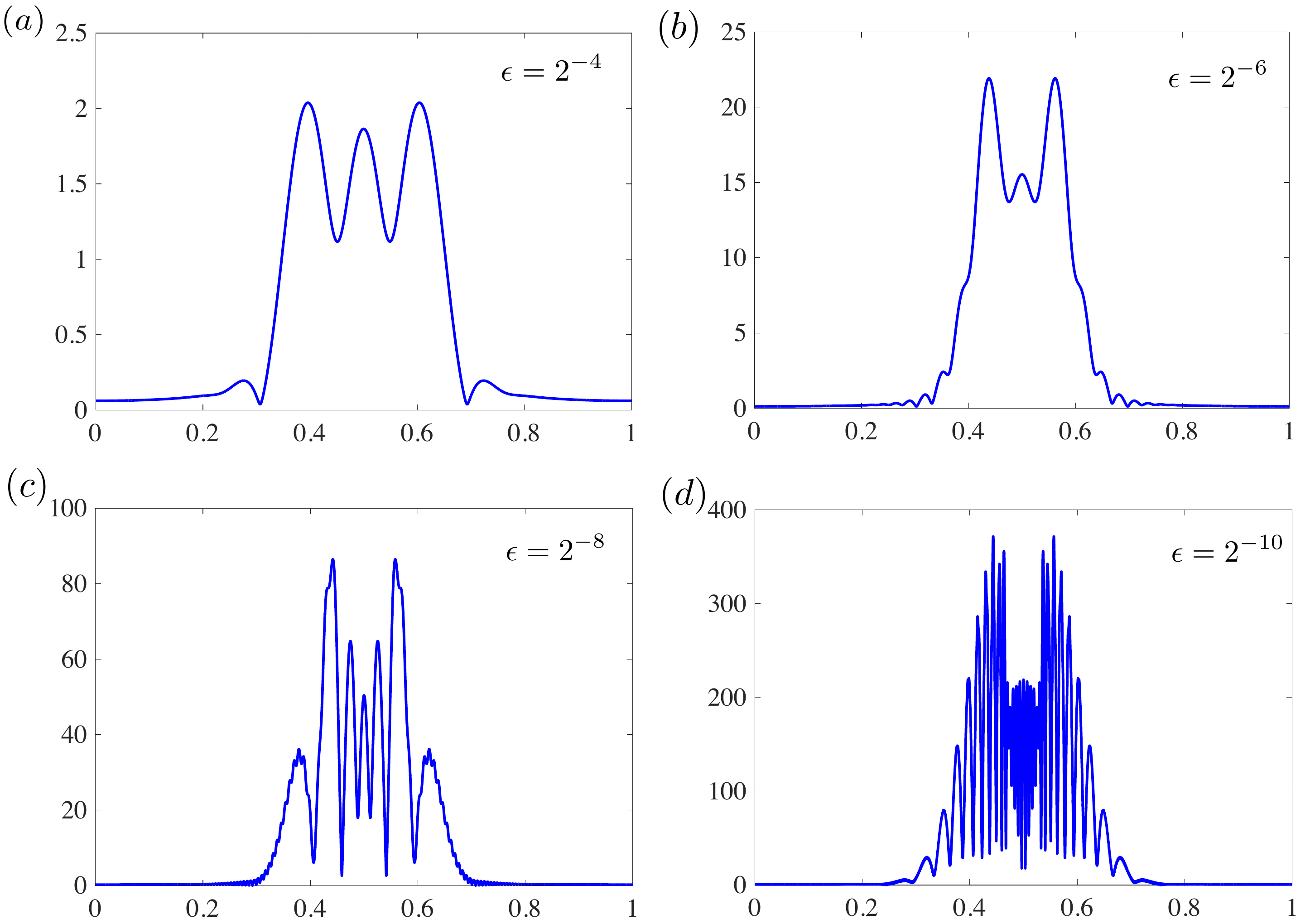}
\end{center}
\caption{Snapshot for amplitude $|u|$ versus $x$ at $t=0.0625$ for different $\epsilon$s. (a)$\sim$(d) for $\epsilon=2^{-4},2^{-6},2^{-8},2^{-10}$ respectively.}
\label{fig:hfreq}
\end{figure}
\begin{figure}[H]
\begin{center}
\includegraphics[width=0.5\textwidth]{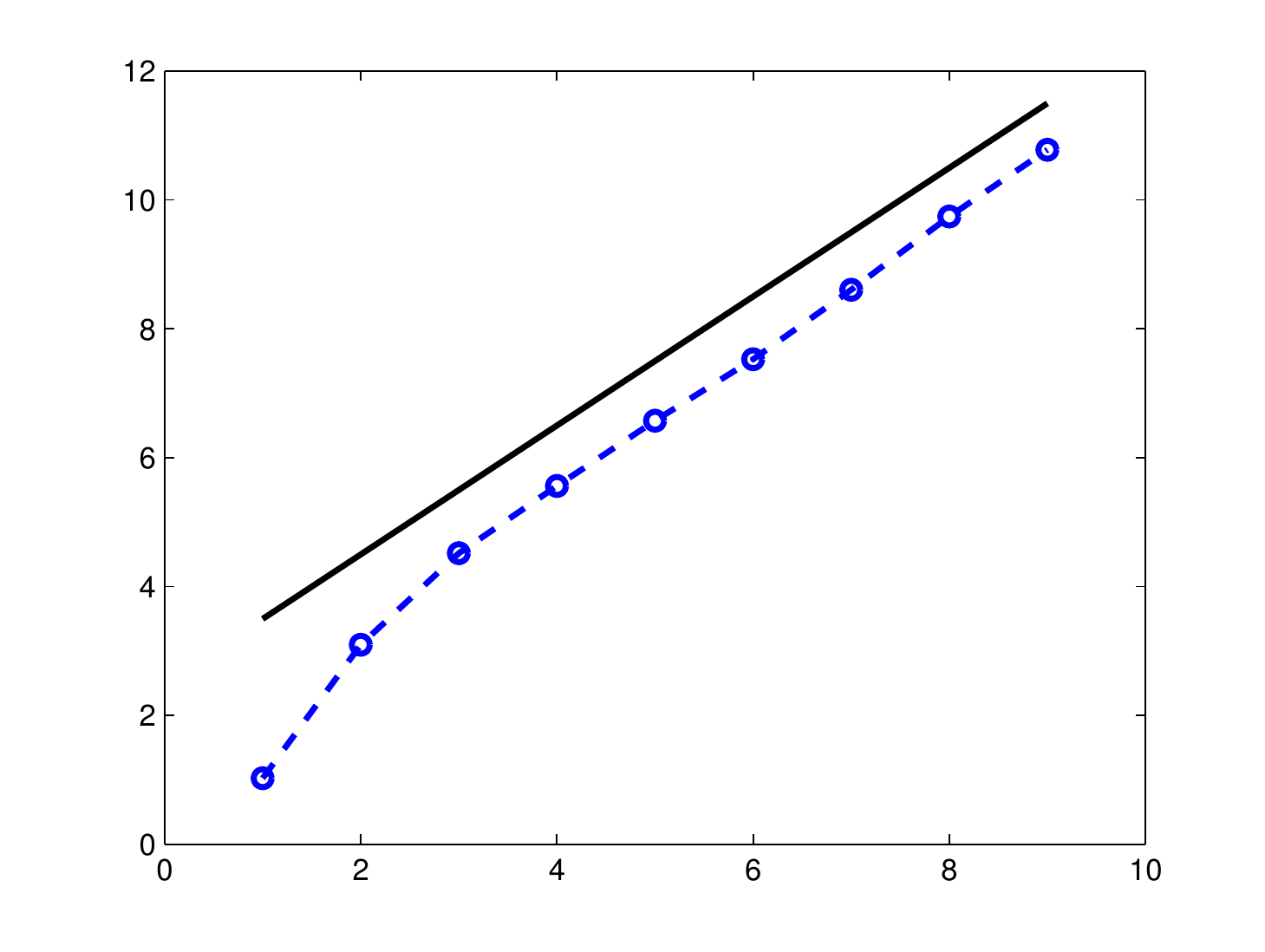}
\end{center}
\caption{Blue circles represent the plot of $y=\max\limits_{t\leqslant 0.0625,x\in[0,1]}\mathrm{log}_2{|u(x,t)|}$ versus $z=-\mathrm{log}_2{\epsilon}-3$, where $\epsilon=2^{-i},i=4,5,...,12$. Meanwhile, a reference line with a slope of 1 is also drawn to show the quantitative relation between $y$ (or amplitude) and $z$ (or $\epsilon$).}
\label{fig:loglog}
\end{figure}
From Figure \ref{fig:loglog}, we observe that when $\epsilon$ is sufficiently small, the curve is almost a line with a slope of 1, which indicates that the maximum amplitude is approximately proportion to $1/\epsilon$. This supports the existence of caustic phenomenon. 

\subsection{Stability condition for water wave simulation}\label{subsec:waterwave}

In this example, we perform the water wave simulation (Equation \eqref{eq:waterwave} with $\alpha\in \mathbb{T}$). The spatial discretization is implemented using filtered Fourier spectral method.
The filter we use in this section is given by
\begin{gather}\label{eq:filter1}
\rho(\xi)=\exp(-10 (\left|\xi\right|/\pi)^{25}),~\xi\in (-\pi, \pi],
\end{gather}
for which the condition $\rho(\pi)=0$ and $\rho'(\pi)=0$ are numerically satisfied.

To verify that our code runs correctly, we first test the same example in \cite{bhl96} to recover the turn over phenomenon. The initial data are given by:
\begin{gather}
\begin{split}
& x(\alpha,0)=\alpha,\\
& y(\alpha, 0)=0.6\cos(\alpha),\\
& \gamma(\alpha, 0)=1+0.6\sin(\alpha).
\end{split}
\end{gather}
The numerical solution is calculated by using $h = 1/512, \tau = 1/4000$, we use RK4 for time discretization here. The snapshots of the waves at different times are shown in Figure \ref{fig:turnover}. We have recovered exactly the same numerical results in \cite{bhl96}.
\begin{figure}[H]
\begin{center}
\includegraphics[width=0.8\textwidth]{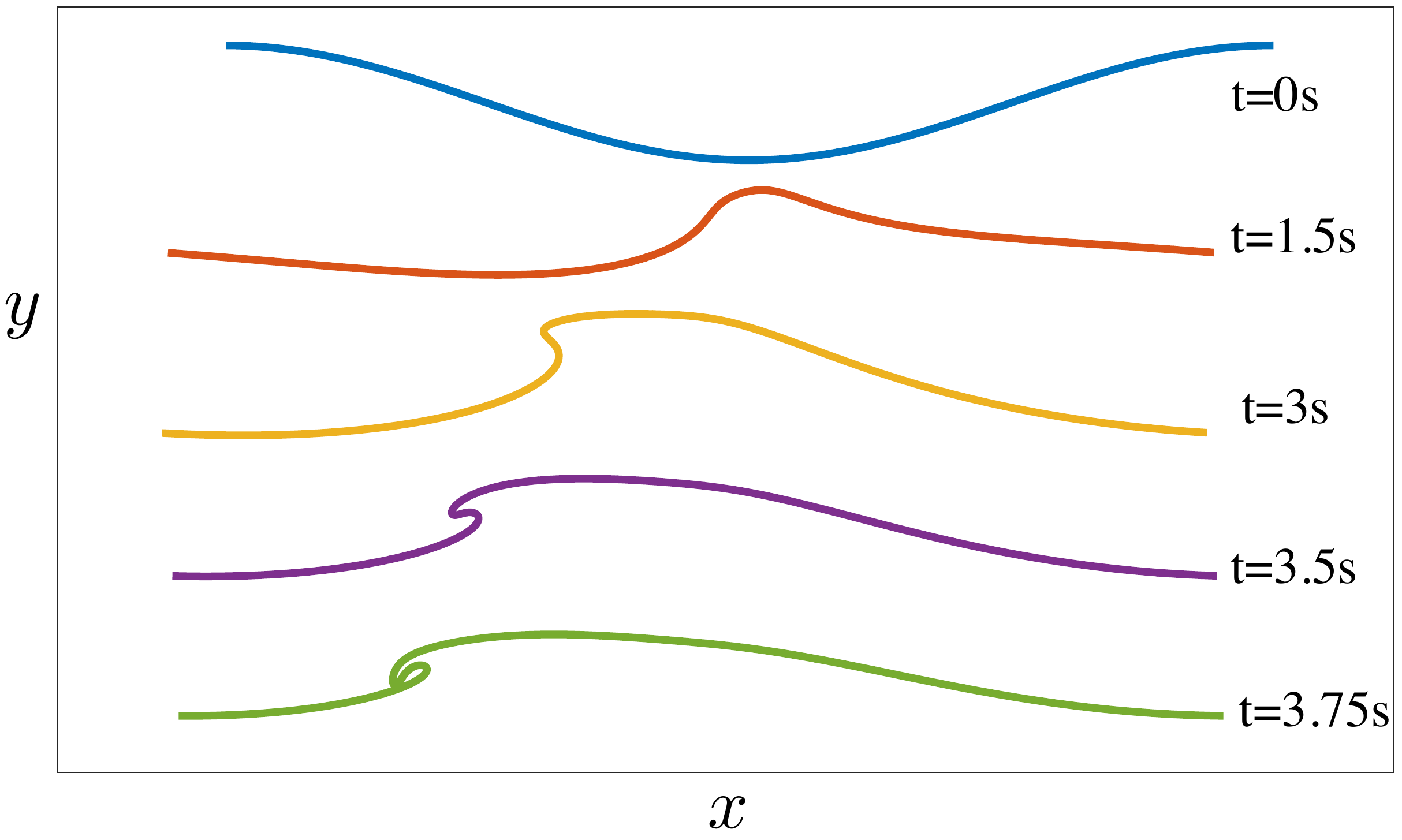}
\end{center}
\caption{Turn over of water waves. As time increases, the water wave turns over gradually. When the time is close to 3.75, the wave is going to break. }
\label{fig:turnover}
\end{figure}
Now, we study numerically the stability condition for the discretization for the water wave problem. The initial data used are given as
\begin{gather}
\begin{split}
& x(\alpha,0)=\alpha,\\
& y(\alpha, 0)=0.3\cos(\alpha),\\
& \gamma(\alpha, 0)=1+0.3\sin(\alpha).
\end{split}
\end{gather}
The spatial discretization is performed using filtered Fourier spectral method with the same filter (Equation \eqref{eq:filter1}) and we test FE and RK4 as temporal discretiztions. The simulations are performed up to time $T=4$.

The results are presented in Figure \ref{fig:wwsqh}. Same as in Section \ref{subsec:stabnonlocal}, the blue part indicates the unstable region while the yellow part represents the stable region. From this figure, we can tell that that stability condition for RK4 is like $\tau \lesssim \sqrt{h}$, while stability condition for FE is like $\tau \lesssim h$, in accord with our conclusion drawn in Section \ref{sec:towaterwave}.

\begin{figure}[H]
\begin{center}
\includegraphics[width=0.7\textwidth]{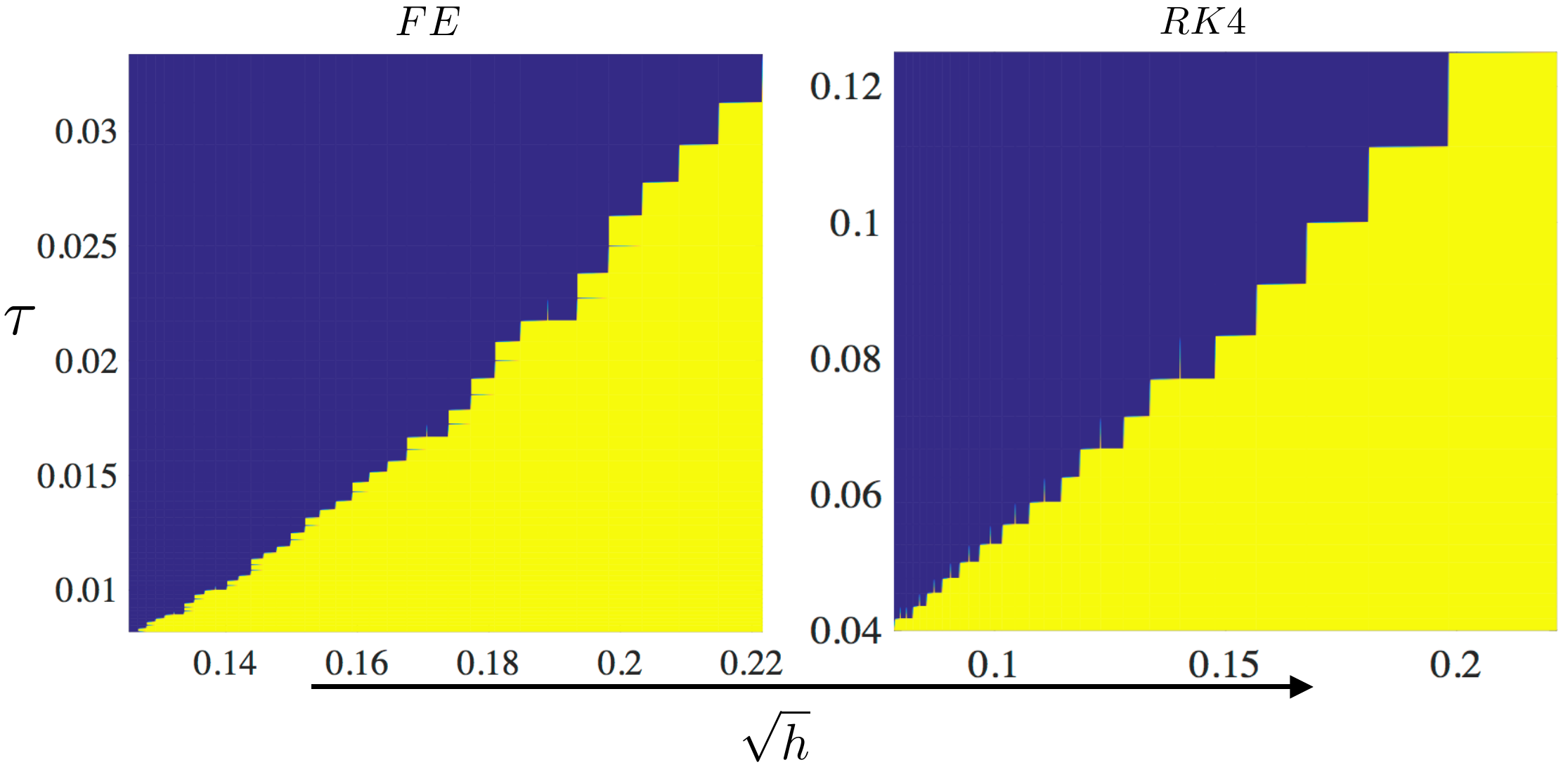}\\
\includegraphics[width=0.7\textwidth]{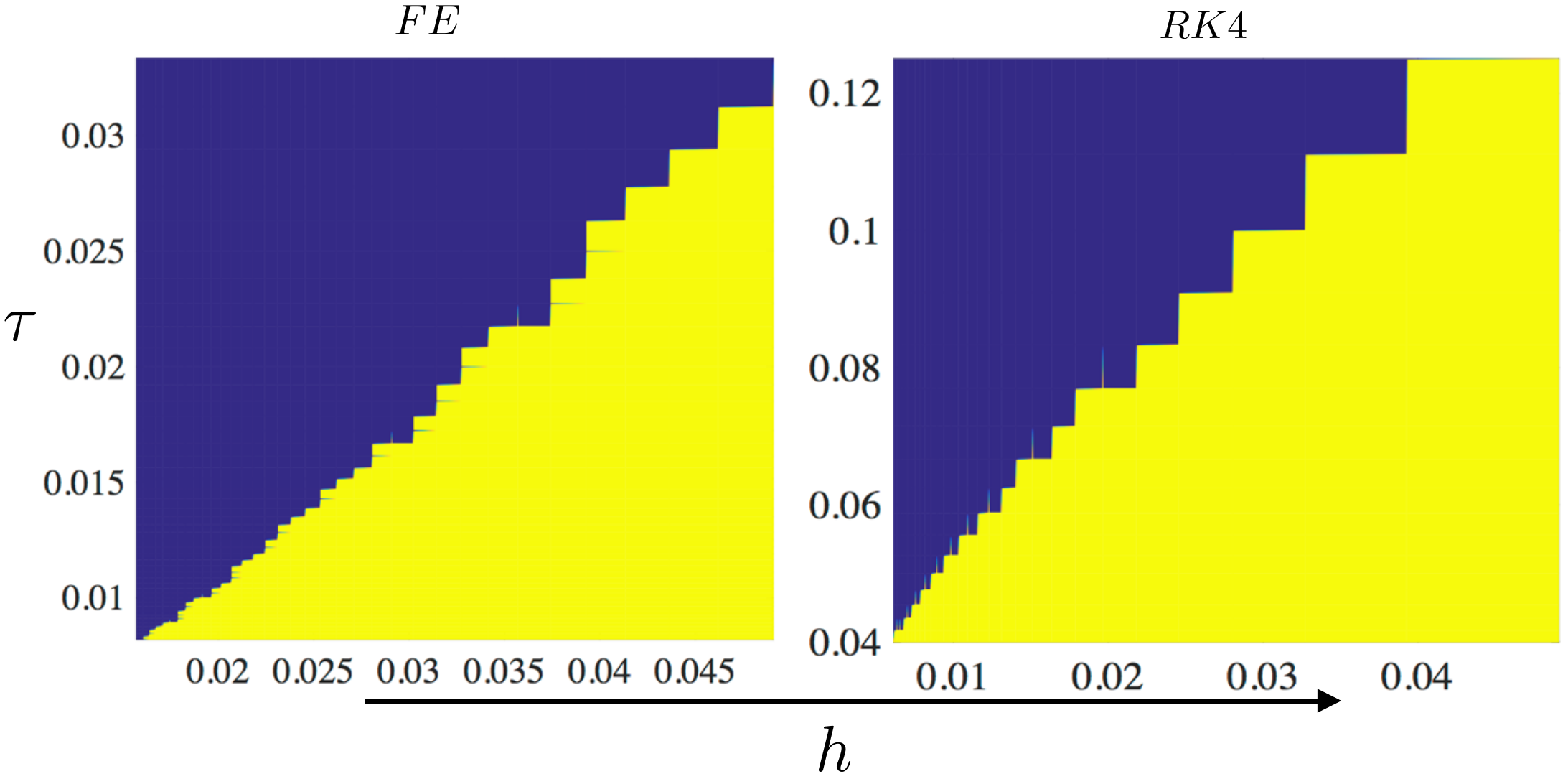}
\end{center}
\caption{stability for waterwave. From this plot, we see that stability conditiosn for RK4 is like $\tau \lesssim \sqrt{h}$, and stability condition for FE is more restrictive. Bottom: we see that stability condition for FE is like $\tau \lesssim h$, while stability condition for RK4 is better.}
\label{fig:wwsqh}
\end{figure}
\section*{Acknowledgements}
J.-G.Liu is supported in part by National Science Foundation (NSF) under award DMS-1514826. Z. Liu is supported by the Elite Undergraduate Training Program of the School of Mathematical Sciences at Peking University. Z. Zhen is partially supported by RNMS11-07444(KI-Net) and a start-up grant from Peking University.
\bibliographystyle{unsrt}
\bibliography{NumAnaPde}
\end{document}